\newcommand{\imp}{\rightarrow}
\newcommand{\en}{\wedge} 
\newcommand{\of}{\vee} 
\newcommand{\ifff}{\leftrightarrow}
\newcommand{\E}{\exists}
\newcommand{\A}{\forall} 
\newcommand{\bx}{\Box}
\newcommand{\af}{\vdash}
\newcommand{\sml}{\ll}
\newcommand{\seq}{\Rightarrow}
\newcommand{\rsch}{{\EuScript R}}
\newcommand{\cald}{{\EuScript D}}
\renewcommand{\phi}{\varphi}
\newcommand{\gam}{\gamma}
\newcommand{\Ga}{\Gamma}
\newcommand{\De}{\Delta}
\newcommand{\GthW}{\mathbf{G4w}}
\newcommand{\Ap}{\forall^{\circ}\hspace{-.3mm}p\hspace{.2mm}}
\newcommand{\Apd}{\forall^{\diamond}\hspace{-.3mm}p\hspace{.2mm}}
\newcommand{\Ep}{\exists^{\circ}\hspace{-.3mm}p\hspace{.2mm}}
\newcommand{\Epd}{\exists^{\diamond}\hspace{-.3mm}p\hspace{.2mm}}
\newcommand{\Apm}{\forall^{\circ}\!\!\!_{{m}} \hspace{-.08mm}p\hspace{.2mm}} 
\newcommand{\Epm}{\exists^{\circ}\!\!\!_{{m}} \hspace{-.08mm}p\hspace{.2mm}}
\newcommand{\Apax}{\forall^{\circ}\!\!\!_{{ax}} \hspace{-.08mm}p\hspace{.2mm}}
\newcommand{\Apat}{\forall^{\circ}\!\!\!_{{at}} \hspace{-.08mm}p\hspace{.2mm}}
\newcommand{\Epax}{\exists^{\circ}\!\!\!_{ax} \hspace{-.08mm}p\hspace{.2mm}}
\newcommand{\Epdax}{\exists^{\diamond}\!\!\!_{ax} \hspace{-.08mm}p\hspace{.2mm}}
\newcommand{\Epat}{\exists^{\circ}\!\!\!_{at} \hspace{-.08mm}p\hspace{.2mm}}
\begin{document}

\begin{frontmatter}
  \title{Uniform Lyndon interpolation for intuitionistic monotone modal logic}
  \author{Amirhossein Akbar Tabatabai}
  \author{Rosalie Iemhoff}
  \author{Raheleh Jalali}\footnote{Support by the Netherlands Organisation for Scientific Research under grant 639.073.807 and by the FWF project P 33548 is gratefully acknowledged.}
  \address{University of Groningen \\ Bernoulliborg,  Nijenborgh 9 \\ 9747 AG Groningen, the Netherlands}
  \address{Utrecht University \\ Janskerkhof 13 \\ 3512 BL Utrecht, the Netherlands}

  \begin{abstract}
In this paper we show that the intuitionistic monotone modal logic $\mathsf{iM}$ has the uniform Lyndon interpolation property (ULIP). The logic $\mathsf{iM}$ is a non-normal modal logic on an intuitionistic basis, and the property ULIP is a strengthening of interpolation in which the interpolant depends only on the premise or the conclusion of an implication, respecting the polarities of the propositional variables. Our method to prove ULIP yields explicit uniform interpolants and makes use of a terminating sequent calculus for $\mathsf{iM}$ that we have developed for this purpose. As far as we know, the results that $\mathsf{iM}$ has ULIP and a terminating sequent calculus are the first of their kind for an intuitionistic non-normal modal logic.
However, rather than proving these particular results, our aim is to show the flexibility of the constructive proof-theoretic method that we use for proving ULIP. It has been developed over the last few years and has been applied to substructural, intermediate, classical (non-)normal modal and intuitionistic normal modal logics. In light of these results, intuitionistic non-normal modal logics seem a natural next class to try to apply the method to, and we take the first step in that direction in this paper. 
  \end{abstract}

  \begin{keyword}
  intuitionistic monotone modal logic,
uniform interpolation,
uniform Lyndon interpolation.

  \end{keyword}
 \end{frontmatter}

\section{Introduction}
Over the last years a method to prove uniform (Lyndon) interpolation has been developed by the authors that applies to various (intuitionistic) modal and intermediate logics \cite{Iemhoff,Iemhoffa,Iemhoffc,Iemhoffetal,Jalali&Tabatabai,Jalali&Tabatabaia}. Uniform interpolation is a strengthening of interpolation in which the interpolant depends only on the premise or the conclusion of an implication. It is Lyndon whenever the interpolant in addition respects the polarities of the propositional variables involved. Our method to prove the property is based on sequent calculi for the given logics. Until now, it has been applied to classical modal logics, normal as well as non-normal, but in the intuitionistic setting only to intermediate logics and to intuitionistic modal logics that are normal. 

In this paper, we try to show the general applicability of the method by applying it to a well-known intuitionistic non-normal modal logic namely the intuitionistic monotone modal logic {\sf iM}, which is axiomatized over intuitionistic propositional logic $\mathsf{IPC}$ by the following axiom and rule \cite{Olivetti}: 
\begin{center}
    $\bx (\phi \en \psi) \imp \bx\phi \en \bx \psi$ 
    \ \ \ \ 
    \AxiomC{$\phi \imp \psi$}
    \AxiomC{$\psi\imp \phi$}
    \RightLabel{$E$}
    \BinaryInfC{$\bx\phi \imp\bx\psi$}
    \DisplayProof
\end{center}
The axiom is one direction of the principle $\bx(\phi \en \psi) \ifff \bx\phi\en\bx\psi$ that holds in every normal modal logic. We show that $\mathsf{iM}$ 
has uniform Lyndon interpolation, which, to our knowledge, is the first result of this kind, meaning the first result stating that an intuitionistic non-normal modal logic has uniform (Lyndon) interpolation. Our method is effective in that it provides explicit (existential and universal) interpolants and it makes use of a terminating sequent calculus for the logic. The calculus is an extension of the calculus $\mathbf{G4ip}$, which has been introduced by Dyckhoff as a variant of $\mathbf{G3ip}$ in which proof search terminates (without extra conditions on the search) \cite{Dyckhoff}. The terminating calculus that we develop here seems to be the first terminating calculus for the logic {\sf iM}. 
Our method  to prove uniform interpolation is inspired by the first syntactic proof of uniform interpolation, given by Pitts for intuitionistic propositional logic \cite{Pitts}.

As can be seen from \cite{Olivetti}, the semantics for intuitionistic non-normal modal logic that combines the semantics of intuitionistic logic and classical non-normal modal logic is not simple. In this light it is somewhat surprising that the proof-theoretic method developed in this paper is essentially not more complicated than the one for its normal counterpart, that, we have to admit, is already quite complicated in itself.  

In the literature there are many syntactic proofs of Craig interpolation, most of them connected in some way or another to the well-known syntactic {\em Maehara method} \cite{Maehara}.  Proofs of uniform interpolation are less common, and syntactic proofs of uniform interpolation even more so. Most of the existing proofs are inspired by Pitts' syntactic proof of uniform interpolation for $\mathsf{IPC}$ \cite{Pitts} mentioned earlier. Some proof systems seem to lend themselves better for syntactic proofs of (uniform) interpolation than others. Especially for nested sequents there are several syntactic results. There are nested sequent systems for certain tense logics and bi-intuitionistic logic that allow for a syntactic proof of Craig interpolation \cite{Lyon}. A similar statement holds for various modal and intermediate logics, although in this case the method is no longer purely syntactical but also contains semantic elements \cite{Fitting,Kuznets}.

Uniform interpolation has applications in computer science, in particular in description logics \cite{Lutzetal}, but our interest in the property stems from a project in universal proof theory where we aim to develop methods to prove that certain (classes of) logics cannot have certain well-behaved proof systems, in our case sequent calculi \cite{Jalali&Tabatabaia,Iemhoffa}. Here we make use of the fact that uniform interpolation seems to be a rare property among logics. For example, only seven intermediate logics have this property \cite{Ghilardi,Maksimova}. In fact, in this case also only seven intermediate logics have Craig interpolation, but in modal logics these two properties in general do not coincide. And while in this class the property is equivalent to interpolation, this is certainly not the case for modal logics. The logics $\mathsf{K4}$ and $\mathsf{S4}$ are examples of logics that have Craig interpolation but not uniform interpolation \cite{Bilkova,Ghilardi}. The result in this paper is meant as a first step to also consider the class of intuitionistic non-normal modal logics in the project. 
The reason that we only treat one logic in that class and only prove that it has uniform Lyndon interpolation without taking the further generalization steps needed for the project, is more for reasons of space than anything else. We hope to take these further steps and cover more intuitionistic non-normal modal logics in the future.

This paper is built up as follows. Section~\ref{sectionpreliminaries} contains the preliminaries, in which the interpolation properties, the intuitionistic non-normal modal logic $\mathsf{iM}$, and the sequent calculi $\mathbf{G3iM^w}$ and $\mathbf{G4w}$ are defined. In Section~\ref{SecTerminatingProofSystems}, the terminating calculus $\mathbf{G4iM}$ is introduced and is shown to be equivalent to $\mathbf{G3iM^w}$, which implies that it is a terminating calculus for $\mathsf{iM}$. In Section~\ref{sectionuniform}, it is proved that $\mathsf{iM}$ has uniform Lyndon interpolation property.

\section{Preliminaries}
 \label{sectionpreliminaries}
The language we use is $\mathcal{L}=\{\wedge, \vee, \to, \Box, \bot\}$, and $\top$ is an abbreviation for $\bot \to \bot$, as usual. We use small Roman letters $p, q, \ldots$ for atomic formulas, small Greek letters $\phi, \psi, \ldots$ to denote formulas, and capital Greek letters $\Sigma, \Delta, \ldots$ and also $\bar{\phi}, \bar{\psi}, \ldots$ to denote finite multisets of formulas.
The \emph{weight} of a formula is defined as follows, which is a combination of the definitions given in \cite{Bilkova} and \cite{Dyckhoff}: $w(p)=w(\bot)=w(\top)=1$, for any atomic formula $p$, $w(\phi \odot \psi)=w(\phi)+ w(\psi)+1$, for $\odot \in \{\vee, \to\}, w(\phi \wedge \psi)=w(\phi)+ w(\psi)+2,$ and $w(\Box \phi)=w(\phi)+1$. This weight function induces an ordering on 
the multisets: $\Gamma \prec \Delta$ if $\Gamma$ is obtained from $\Delta$ by replacing one or more formulas of $\Delta$ by zero or more formulas, each of which is of a strictly lower weight. Note that this order is well-founded.
\begin{definition}
The sets of positive and negative variables of a formula $\phi \in \mathcal{L}$, denoted by $V^+(\phi)$ and $V^-(\phi)$, respectively, are defined recursively by:

\begin{itemize}
\item[$\bullet$]
$V^+(p)=\{p\}$, \hspace{-3pt}$V^-(p)=
V^+(\top)=V^-(\top)=V^+(\bot)=V^-(\bot)=\varnothing$ \hspace{-4pt} for \hspace{-4pt} 
atom $p$,
\item[$\bullet$]

$V^+(\phi \odot \psi)=V^+(\phi) \cup V^+(\psi)$, 
$V^-(\phi \odot \psi)=V^-(\phi) \cup V^-(\psi)$, for 
$\odot \in \{\wedge, \vee\}$, 
\item[$\bullet$]

$V^+(\phi \to \psi)=V^-(\phi) \cup V^+(\psi)$ and $V^-(\phi \to \psi)=V^+(\phi) \cup V^-(\psi)$,
\item[$\bullet$]

$V^+(\Box \phi)=V^+(\phi)$  and $V^-(\Box \phi)=V^-(\phi)$.
\end{itemize}
\noindent Define $V(\phi)$ as $V^+(\phi) \cup V^-(\phi)$ and set $V^+(\Gamma)=\bigcup_{\gamma \in \Gamma} V^+(\gamma)$ and $V^-(\Gamma)=\bigcup_{\gamma \in \Gamma} V^-(\gamma)$, for a multiset $\Gamma$. For an atomic formula $p$, a formula $\phi$ is called {\em $p^+$-free} ({\em $p^-$-free}), if $p \notin V^+(\phi)$ ($p \notin V^-(\phi)$). It is called {\em $p$-free} if $p \notin V(\phi)$. Note that a formula is $p$-free iff $p$ does not occur anywhere in the formula.
\end{definition}

\noindent We will need the following notations: if we want to refer to both $V^+(\phi)$ and $V^-(\phi)$, we use $V^{\dagger}(\phi)$, with the condition ``for any $\dagger \in \{+, -\}$". When we want to refer to both $V^+(\phi)$ and $V^-(\phi)$ but also to their duals,
we use the notation $V^{\circ}(\phi)$ for the one we intend and $V^{\diamond}(\phi)$\footnote{The superscript $\diamond$ has nothing to do with the usual modality $\Diamond$.} for its dual, respectively.
Therefore, by the sentence `if $p \in V^{\circ}(\phi)$, then $p \notin V^{\diamond}(\phi)$, for any $\circ, \diamond \in \{+, -\}$', we mean `if $p \in V^{+}(\phi)$, then $p \notin V^{-}(\phi)$ and if $p \in V^{-}(\phi)$, then $p \notin V^{+}(\phi)$'. 

\begin{definition} \label{Logic}
A \emph{logic} $L$ is a set of formulas of $\mathcal{L}$ extending the set of intuitionistic tautologies, $\mathsf{IPC}$, and closed under substitution and modus ponens $\phi, \phi \to \psi \vdash \psi$.
\end{definition}

\begin{definition} \label{DfnLyndonInterpolation}
A logic $L$ has the \emph{Lyndon interpolation property (LIP)} if for any formulas $\phi, \psi \in \mathcal{L}$ such that $L \vdash \phi \to \psi$, there is a formula $\theta \in \mathcal{L}$ such that $V^{\dagger}(\theta) \subseteq V^{\dagger}(\phi) \cap V^{\dagger}(\psi)$, for any $\dagger \in \{+, -\}$ and $L \vdash \phi \to \theta$ and $L \vdash \theta \to \psi$. A logic has \emph{Craig interpolation (CIP)} if it has 
the above properties, omitting all the superscripts $\dagger \in \{+, -\}$. 
\end{definition}

\begin{definition} \label{DfnUniformInterpolation} 
A logic $L$ has the \emph{uniform Lyndon interpolation property (ULIP)} if for any formula $\phi \in \mathcal{L}$, 
atom 
$p$, and 
$\circ \in \{+, -\}$, there are 
$p^{\circ}$-free formulas, 
$\Ep \phi$ and $\Ap \phi$, such that $V^{\dagger}(\Ep \phi) \subseteq V^{\dagger}(\phi)$ and $V^{\dagger}(\Ap \phi) \subseteq V^{\dagger}(\phi)$, for any 
$\dagger \in \{+, -\}$ and
\begin{description}
\item[$(i)$]
$L \vdash \phi \to \Ep \phi$, and
\item[$(ii)$]
for any $p^{\circ}$-free formula $\psi$ if $L \vdash \phi \to \psi$ then $L \vdash \Ep \phi \to \psi$, 
\item[$(iii)$]
$L \vdash \Ap \phi \to \phi$,
\item[$(iv)$]
for any $p^{\circ}$-free formula $\psi$ if $L \vdash \psi \to \phi$ then $L \vdash \psi \to \Ap \phi$,
\end{description}
A logic has \emph{uniform interpolation property (UIP)} if it has all the above properties, omitting the superscripts $\circ, \dagger \in \{+, -\}$, everywhere. Note that although the interpolants are indicated by expressions that contain symbols that do not belong to $\mathcal{L}$, they do stand for formulas in the language $\mathcal{L}$. 
\end{definition}

\begin{theorem}\label{ThmULIPresultsUIP}
If a logic $L$ has ULIP, 
then it has both LIP and UIP.
\end{theorem}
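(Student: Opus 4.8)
The plan is to derive the two properties separately from ULIP, using the Lyndon interpolants $\Ep\phi$ and $\Ap\phi$ as building blocks and exploiting that $L$ extends $\mathsf{IPC}$, so that $\imp$ is transitive in $L$ (via modus ponens). For UIP the idea is to obtain a fully $p$-free interpolant by eliminating a polarity and then its dual: set the $p$-free existential interpolant to be $\Epd\Ep\phi$ and the universal one to be $\Apd\Ap\phi$, applying the Lyndon operator for one polarity and then for its dual. First I would check these are $p$-free: the inner $\Ep\phi$ is $p^{\circ}$-free by ULIP, and since $V^{\dagger}(\Epd\Ep\phi)\subseteq V^{\dagger}(\Ep\phi)$ the outer operator preserves $p^{\circ}$-freeness while additionally making the result $p^{\diamond}$-free, so $p\notin V(\Epd\Ep\phi)$; the universal case is identical. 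Clauses $(i)$–$(iv)$ then follow by two nested applications of the matching ULIP clauses. For $(i)$ one composes $L\af\phi\imp\Ep\phi$ with $L\af\Ep\phi\imp\Epd\Ep\phi$; for $(ii)$, if $\psi$ is $p$-free and $L\af\phi\imp\psi$, then $\psi$ is $p^{\circ}$-free so ULIP$(ii)$ gives $L\af\Ep\phi\imp\psi$, and $\psi$ is also $p^{\diamond}$-free so a second use of ULIP$(ii)$ gives $L\af\Epd\Ep\phi\imp\psi$; clauses $(iii)$,$(iv)$ are dual. The variable inclusion $V^{\dagger}(\Epd\Ep\phi)\subseteq V^{\dagger}(\phi)$, hence $V(\Epd\Ep\phi)\subseteq V(\phi)$, is immediate from composing the two ULIP inclusions.

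For LIP, given $\phi,\psi$ with $L\af\phi\imp\psi$, I would successively quantify away the atoms of $\phi$ that occur with a polarity not shared by $\psi$. Call a pair $(p,\circ)$ \emph{offending} for a formula $\chi$ if $p\in V^{\circ}(\chi)\setminus V^{\circ}(\psi)$. Starting from $\theta_0:=\phi$, while $\theta_i$ has an offending pair $(p,\circ)$ I set $\theta_{i+1}:=\Ep\theta_i$. Since $\Ep\theta_i$ is $p^{\circ}$-free and $V^{\dagger}(\Ep\theta_i)\subseteq V^{\dagger}(\theta_i)$ in each polarity, the variable sets only shrink, so no new offending pair is ever created while the pair $(p,\circ)$ is removed; thus the number of offending pairs strictly decreases and the construction terminates at some $\theta:=\theta_N$ with no offending pair, i.e.\ $V^{\dagger}(\theta)\subseteq V^{\dagger}(\psi)$ for each $\dagger$. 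Together with $V^{\dagger}(\theta)\subseteq V^{\dagger}(\phi)$, cumulative from the inclusions, this yields the Lyndon condition $V^{\dagger}(\theta)\subseteq V^{\dagger}(\phi)\cap V^{\dagger}(\psi)$. The provability side is carried along as two invariants: $L\af\phi\imp\theta_i$ holds by composing the instances $L\af\theta_i\imp\theta_{i+1}$ from ULIP$(i)$, and $L\af\theta_i\imp\psi$ is preserved because the eliminated pair is offending, so $\psi$ is $p^{\circ}$-free and ULIP$(ii)$ turns $L\af\theta_i\imp\psi$ into $L\af\Ep\theta_i\imp\psi$. At termination these give $L\af\phi\imp\theta$ and $L\af\theta\imp\psi$, as required.

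Both arguments are short; the point requiring care—the main obstacle—is the termination and variable bookkeeping in the LIP construction, namely verifying that eliminating one offending pair cannot resurrect another. This is exactly where the polarity-sensitive inclusions $V^{\dagger}(\Ep\chi)\subseteq V^{\dagger}(\chi)$ of ULIP are essential: they guarantee monotone shrinking of each $V^{\dagger}$, so the count of offending pairs is a strictly decreasing well-founded measure. I would also note that LIP cannot be shortcut through the already-established UIP, since eliminating an atom entirely would discard occurrences of the correct polarity that the Lyndon condition is allowed to keep; hence the Lyndon operators $\Ep$, rather than the full $p$-free interpolants, are what the LIP argument must use.
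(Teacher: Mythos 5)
Your proposal is correct and follows essentially the same route as the paper: for UIP you compose the two polarity operators $\Epd\Ep\phi$ (resp.\ $\Apd\Ap\phi$) exactly as the paper defines $\exists p\,\phi=\exists^{+}p\,\exists^{-}p\,\phi$ and $\forall p\,\phi=\forall^{+}p\,\forall^{-}p\,\phi$, and for LIP you existentially quantify away precisely the variable--polarity pairs in $V^{\dagger}(\phi)\setminus V^{\dagger}(\psi)$, which is the paper's $\theta=\exists^{+}P^{+}\exists^{-}P^{-}\phi$. Your only departure is presentational: you phrase the LIP construction as a terminating while-loop with a decreasing count of offending pairs, whereas the paper fixes the sets $P^{\dagger}$ upfront; both rest on the same monotone-shrinking inclusions $V^{\dagger}(\Ep\chi)\subseteq V^{\dagger}(\chi)$.
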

\begin{proof}
For UIP, define $\forall p \phi=\forall^+ p \forall^- p \phi$ and $\exists p \phi=\exists^+ p \exists^- p \phi$. We will show that $\forall p \phi$ acts as the uniform interpolant for $\phi$. The case for $\exists p \phi$ is similar. By definition, $V^{\dagger}(\forall^+ p \forall^- p \phi) \subseteq V^{\dagger}(\forall^- p \phi) \subseteq V^{\dagger}(\phi)$, for any $\dagger \in \{+,-\}$. Therefore, $V(\forall p \phi) \subseteq V(\phi)$. Moreover, $\forall^+ p \forall^- p \phi$ is $p^+$-free, by definition. Suppose $p \in V^-(\forall^+ p \forall^- p \phi)$. Then, $p \in V^-(\forall^- p \phi)$, which is a contradiction, as $\forall^- p \phi$ is $p^-$-free. Hence, $\forall p \phi$ is $p$-free. 

Condition $(iii)$ is easy, as we have $L \vdash \forall^+ p \forall^- p \phi \to  \forall^- p \phi$ and $L \vdash  \forall^- p \phi \to \phi$, by Definition \ref{DfnUniformInterpolation}. Therefore, $L \vdash \forall p \phi \to \phi$. For condition $(iv)$, let $\psi$ be a $p$-free formula such that $L \vdash \psi \to \phi$. Then, as $\psi$ is $p^-$-free, we have $L \vdash \psi \to \forall^- p \phi$ and as $\psi$ is $p^+$-free, we get $L \vdash \psi \to \forall^+ p \forall^- p \phi$.

For LIP, 
assume $L \vdash \phi \to \psi$. Define
$\theta=\exists^+ P^+ \exists^- P^- \phi$, where $P^{\dagger}=V^{\dagger}(\phi)-[V^{\dagger}(\phi) \cap V^{\dagger}(\psi)]$, for any $\dagger \in \{+, -\}$ and by $\exists^{\dagger} \{p_1, \dots, p_n\}^{\dagger}$ we mean $\E p^{\dagger}_1 \dots \E p^{\dagger}_n$. Since $\theta$ is $p^{\dagger}$-free for any $p \in P^{\dagger}$ and any $\dagger \in \{+, -\}$, we have $V^{\dagger}(\theta) \subseteq V^{\dagger}(\phi)-P^{\dagger} \subseteq V^{\dagger}(\phi) \cap V^{\dagger}(\psi)$. 
For the other condition, it is clear that $L \vdash \phi \to \theta$ and as $\psi$ is $p^{\dagger}$-free, for any $p \in P^{\dagger}$, we have $L \vdash \theta \to \psi$.
\end{proof}

\subsection{Sequent calculi}\label{sectionsequent}
A \emph{sequent} $S$ is any expression of the form $\Gamma \Rightarrow \Delta$, where $\Gamma$ and $\Delta$ are two multisets of formulas called the \emph{antecedent} and the \emph{succedent} of the sequent, denoted by $S^{a}$ and $S^{s}$, respectively. A sequent is called \emph{single-conclusion} if its succedent has at most one formula. The \emph{multiplication} of the sequents $S$ and $T$ is defined by $S \cdot T = (S^{a} \cup T^{a})\Rightarrow (S^{s} \cup T^{s})$. The set of {\em positive variables} $(V^+)$ and the set of {\em negative variables} $(V^-)$ of a sequent $S$ are defined by $V^{\circ} (S)=V^{\diamond} (S^{a}) \cup V^{\circ} (S^{s})$, for any $\circ \in \{+, -\}$ and $V(S)= V(S^{a}) \cup V(S^{s})$. In case it is clear from the notation which set we mean we omit the words ``positive'' and ``negative''. The ordering $\prec$ can be extended to sequents by $S \prec T:= S^{a} \cup S^s \prec T^{a} \cup T^s$. If $S \prec T$, we say that $S$ is \emph{lower} than $T$. Sequents and multisets can also be compared with each other in an expected way. For instance, $\Sigma \prec S$ means $\Sigma \prec (S^{a} \cup S^s)$. A \emph{rule} is an expression of the form 
   \begin{tabular}{c}
         \AxiomC{$S_1 \; \ldots \; S_n$}
 \UnaryInfC{$S$}
 \DisplayProof
    \end{tabular}
where $S_1, \ldots, S_n,$ and $S$ are sequents called the \emph{premises} and the \emph{conclusion} of the rule, respectively.
If a sequent $S$ is the conclusion of an instance of a rule, we say that the rule is \emph{backward applicable} to $S$.  A \emph{sequent calculus} is a set of rules. In this paper, we consider single-conclusion sequent calculi, where only single-conclusion sequents are allowed.

\noindent Let us introduce three sequent calculi that we need throughout the paper. The first is the sequent calculus $\mathbf{G3iM^w}$\footnote{The use of superscript $\mathbf{^w}$ becomes clear in the next section.} presented in Figure \ref{figG3iM}. The system was introduced in \cite{Olivetti} under the name $\mathsf{G}.\Box\!-\!\mathsf{IM}$.
\begin{figure}
\begin{center}
 \begin{tabular}{ccc}
 $\Gamma, p \Rightarrow p \qquad Ax$
 &
$\Gamma, \bot \Rightarrow \phi \qquad L \bot$
 \\[1.5ex]  
 \AxiomC{$\Gamma, \phi, \psi \Rightarrow \theta$}
 \RightLabel{\small$L \wedge$} 
 \UnaryInfC{$\Gamma, \phi \wedge \psi \Rightarrow \theta$}
 \DisplayProof
 &
 \AxiomC{$\Gamma \Rightarrow \phi$}
 \AxiomC{$\Gamma \Rightarrow \psi$}
 \RightLabel{\small $R \wedge$} 
 \BinaryInfC{$\Gamma \Rightarrow \phi \wedge \psi$}
 \DisplayProof
  \\[3ex]
 \AxiomC{$\Gamma, \phi \Rightarrow \theta$}
 \AxiomC{$\Gamma, \psi \Rightarrow \theta$}
 \RightLabel{\small$L \vee$} 
 \BinaryInfC{$\Gamma, \phi \vee \psi \Rightarrow \theta$}
 \DisplayProof
 &
 \AxiomC{$\Gamma \Rightarrow \phi_i$}
 \RightLabel{\small$R \vee$ ($i=0,1$)} 
 \UnaryInfC{$\Gamma \Rightarrow \phi_0 \vee \phi_1$}
 \DisplayProof 
 \\[3ex]
 \AxiomC{$\Gamma, \phi \to \psi \Rightarrow \phi$}
 \AxiomC{$\Gamma, \psi \Rightarrow \theta$}
 \RightLabel{\small$L\! \to$} 
 \BinaryInfC{$\Gamma, \phi \to \psi \Rightarrow \theta$}
 \DisplayProof
 &
 \AxiomC{$\Gamma, \phi \Rightarrow \psi$}
 \RightLabel{\small$R\! \to$} 
 \UnaryInfC{$\Gamma \Rightarrow \phi \to \psi$}
 \DisplayProof \\
\end{tabular}
\begin{tabular}{c}
\hspace{20pt}
  \AxiomC{$\phi \Rightarrow \psi$}
 \RightLabel{\small$M^{seq}_{\Box}$} 
 \UnaryInfC{$\Gamma, \Box \phi \Rightarrow \Box \psi$}
  \DisplayProof
\end{tabular}
\caption{The sequent calculus $\mathbf{G3iM^w}$. In $Ax$, $p$ must be an atom.}
\label{figG3iM}
\end{center}
\end{figure}
The logic of $\mathbf{G3iM^w}$, i.e., the set of all formulas such that $(\Rightarrow \phi)$ is derivable in $\mathbf{G3iM^w}$, is called $\mathsf{iM}$, the intuitionistic monotone modal logic. The second system is $\mathbf{G4ip}$, a sequent calculus for $\mathsf{IPC}$ presented in Figure \ref{figg4ip} and introduced in \cite{Dyckhoff}.
\begin{figure}
\begin{center}
 \begin{tabular}{ccc}
 $\Gamma, p \Rightarrow p \qquad Ax$
 &
$\Gamma, \bot \Rightarrow \phi \qquad L \bot$
 \\[1.5ex]
 \AxiomC{$\Gamma, \phi, \psi \Rightarrow \Delta$}
 \RightLabel{\small$L\wedge$} 
 \UnaryInfC{$\Gamma, \phi \wedge \psi \Rightarrow \Delta$}
 \DisplayProof
 &
 \AxiomC{$\Gamma \Rightarrow \phi$}
 \AxiomC{$\Gamma \Rightarrow \psi$}
 \RightLabel{\small$R \wedge$} 
 \BinaryInfC{$\Gamma \Rightarrow \phi \wedge \psi$}
 \DisplayProof
  \\[3ex]
 \AxiomC{$\Gamma, \phi \Rightarrow \Delta$}
 \AxiomC{$\Gamma, \psi \Rightarrow \Delta$}
 \RightLabel{\small$L \vee$} 
 \BinaryInfC{$\Gamma, \phi \vee \psi \Rightarrow \Delta$}
 \DisplayProof
 &
 \AxiomC{$\Gamma \Rightarrow \phi_i$}
 \RightLabel{\small$R\vee_i (i=0, 1)$} 
 \UnaryInfC{$\Gamma \Rightarrow \phi_0 \vee \phi_1$}
 \DisplayProof
 \\[3ex]
 \AxiomC{$\Gamma, p, \psi \Rightarrow \Delta$}
 \RightLabel{\small$Lp \!\to$} 
 \UnaryInfC{$\Gamma, p, p \to \psi \Rightarrow \Delta$}
 \DisplayProof
 &
 \AxiomC{$\Gamma, \phi_1 \to (\phi_2 \to \psi) \Rightarrow \Delta$}
 \RightLabel{\small$L\wedge \!\to$} 
 \UnaryInfC{$\Gamma, (\phi_1 \wedge \phi_2) \to \psi \Rightarrow \Delta$}
 \DisplayProof
 \\[3ex]
 \AxiomC{$\Gamma, \phi_1 \to \psi, \phi_2 \to \psi \Rightarrow \Delta$}
 \RightLabel{\small$L \vee \!\to$} 
 \UnaryInfC{$\Gamma, \phi_1 \vee \phi_2 \to \psi \Rightarrow \Delta$}
 \DisplayProof
 &
 \AxiomC{$\Gamma, \phi \Rightarrow \psi$}
 \RightLabel{\small$R\! \to$} 
 \UnaryInfC{$\Gamma \Rightarrow \phi \to \psi$}
 \DisplayProof
 \\[3ex]
\multicolumn{2}{c}{ \AxiomC{$\Gamma, \phi_2 \to \psi \Rightarrow \phi_1 \to \phi_2$}
  \AxiomC{$\Gamma, \psi \Rightarrow \Delta$}
 \RightLabel{\small$L\!\to \to$} 
 \BinaryInfC{$\Gamma, (\phi_1 \to \phi_2) \to \psi \Rightarrow \Delta$}
 \DisplayProof}
\end{tabular}
\end{center}
\caption{The sequent calculus $\mathbf{G4ip}$ from \cite{Dyckhoff}. In $Ax$ and $(Lp\!\to)$, $p$ is an atom.}
\label{figg4ip}
\end{figure}
\noindent If we add the following weakening rules to $\mathbf{G4ip}$, we get the third system $\mathbf{G4w}$:
\begin{center}
    \begin{tabular}{c c}
         \AxiomC{$\Gamma \Rightarrow \Delta$}
 \RightLabel{\small$L w$} 
 \UnaryInfC{$\Gamma, \phi \Rightarrow \Delta$}
 \DisplayProof
 &
      \AxiomC{$\Gamma \Rightarrow $}
 \RightLabel{\small$R w$} 
 \UnaryInfC{$\Gamma \Rightarrow \phi$}
 \DisplayProof
    \end{tabular}
\end{center}
The weakening rules are admissible in $\mathbf{G4ip}$ and hence there is no need to include them explicitly. However, as we will work with an extension of the system $\mathbf{G4ip}$, we will need the explicit weakening rules later.

As the final part of this section, let us mention some of the properties of the rules in $\mathbf{G4w}$ that we will need later. First, note that in any of the rules of $\mathbf{G4w}$, $\Gamma$ and $\Delta$ are free for any multiset substitution. We call this property the \emph{free-context property}. For later reference, we call any premise of a rule with $\Delta$ in its succedent \emph{contextual} and the other premises \emph{non-contextual}. Second, if we denote the set of rules in $\mathbf{G4w}$ minus the rule $(Lp\!\to)$ by $\mathbf{G4w}^-$, then all the rules of $\mathbf{G4w}^-$, have one of the following general forms:
\begin{center}
 \begin{tabular}{c c}
 \AxiomC{$\{ \Gamma, \bar{\phi}_i \Rightarrow {\bar{\delta}}_i \}_{i \in I}$}
 \AxiomC{$\{ \Gamma, \bar{\psi}_j \Rightarrow \Delta \}_{j \in J}$}
 \BinaryInfC{$\Gamma, \phi \Rightarrow \Delta$}
 \DisplayProof \;\;\;
 &
 \AxiomC{$\{ \Gamma, \bar{\phi}_i \Rightarrow {\bar{\delta}}_i \}_{i \in I}$}
 \UnaryInfC{$\Gamma \Rightarrow \phi$}
 \DisplayProof
\end{tabular}
\end{center}
where $I$ and $J$ are some finite (possibly empty) sets, $\Gamma$ and $\Delta$ are free for any multiset substitution and $\bar{\phi}_i$'s, $\bar{\psi}_i$'s and  $\bar{\delta}_i$'s are (possibly empty) multisets of formulas, where $\bar{\delta}_i$'s are either empty or a singleton. The formula $\phi$ is called the \emph{main} formula and the formulas in $\bar{\phi}_i, \bar{\psi}_j,$ and $\bar{\delta}_i$ are called the \emph{active} formulas of the rule. If the main formula is in the antecedent (succedent), the rule is called a \emph{left} (\emph{right}) rule.  Third, notice that each rule in $\mathbf{G4w}^-$ enjoys the \emph{local variable preserving property}, i.e., given $\circ \in \{+, -\}$, for the left rule, we have $\bigcup_i \bigcup_{\theta \in \bar{\phi}_i} V^{\circ}(\theta) \cup \bigcup_j \bigcup_{\theta \in \bar{\psi}_j} V^{\circ}(\theta) \cup \bigcup_i \bigcup_{\theta \in \bar{\delta}_i} V^{\diamond}(\theta) \subseteq V^{\circ}(\phi),$ and for the right one,
$
\bigcup_i \bigcup_{\theta \in \bar{\phi}_i} V^{\diamond}(\theta) \cup \bigcup_i \bigcup_{\theta \in \bar{\delta}_i} V^{\circ}(\theta) \subseteq V^{\circ}(\phi).$ 
This property ensures the crucial condition $\bigcup_{i=1}^n V^{\circ}(S_i) \subseteq V^{\circ}(S)$, for any instance of the rule
\begin{tabular}{c c c}
 \AxiomC{$S_1 \; \cdots \; S_n$}  
 \UnaryInfC{$S$}
 \DisplayProof
\end{tabular}
in $\mathbf{G4w}^-$ and any $\circ \in \{+, -\}$. We call this weaker property the \emph{variable preserving property}. 
Note that the rule $(Lp\!\to)$ also enjoys this property. Finally, notice that in any rule in $\mathbf{G4w}$, each of the premises is lower than the conclusion in the order $\prec$. 

\section{A Terminating sequent calculus} \label{SecTerminatingProofSystems}
In this section, we provide a terminating single-conclusion sequent calculus for $\mathsf{iM}$. \hspace{-6pt} Define the system $\mathbf{G4iM}$ as $\mathbf{G4w}$ extended by the following rules:
\begin{center}
\begin{tabular}{c c c}
 \AxiomC{$\phi \Rightarrow \psi$}
 \RightLabel{\small$M$} 
 \UnaryInfC{$\Box \phi \Rightarrow \Box \psi$}
 \DisplayProof
&
 \AxiomC{$ \phi \Rightarrow \psi$} 
  \AxiomC{$\Gamma, \Box \phi, \theta \Rightarrow \Delta$}
   \RightLabel{\small$LM\!\!\to$} 
 \BinaryInfC{$\Gamma, \Box \phi, \Box \psi \to \theta \Rightarrow \Delta$}
 \DisplayProof
\end{tabular}
\end{center}
Note that each premise of any rule in $\mathbf{G4iM}$ is lower than the conclusion. Consequently, $\mathbf{G4iM}$ is terminating, i.e., any proof search terminates. 

The remainder of this section is devoted to the proof that $\mathbf{G3iM^w}$ and $\mathbf{G4iM}$ are equivalent, the main part of which consists of a proof that $\mathbf{G3iM^w}$ and $\mathbf{G4iM^w}$ are equivalent. This proof is an adaptation of a similar result for $\mathbf{G3i}$ and $\mathbf{G4i}$ in \cite{Dyckhoff}. We start with some preliminaries. 

\subsection{Strict proofs in $\mathbf{G3iM^w}$}
\begin{lemma}
 \label{lemimpconv}
All rules in $\mathbf{G3iM^w}$ except $R\of$ and $L\!\imp$ are invertible, and $\mathbf{G3iM^w}$ 
is closed under weakening, contraction and {\em implication inversion}, i.e.\ the following rule is admissible:
\[
 \AxiomC{$\Ga,\phi\imp\psi \seq \De$}
 \UnaryInfC{$\Ga, \psi \seq \De$}
 \DisplayProof
\]
\end{lemma}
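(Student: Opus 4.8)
The plan is to prove each of the four assertions in Lemma~\ref{lemimpconv} by induction on the height of derivations in $\mathbf{G3iM^w}$, treating invertibility, weakening, contraction, and implication inversion in turn, since the later ones will lean on the earlier ones. First I would establish \emph{weakening admissibility} (both left and right): given a derivation of $\Ga \seq \De$, I show by a routine height-preserving induction that $\Ga, \phi \seq \De$ (and, for the empty-succedent case, $\Ga \seq \phi$) is derivable. The only rule that needs a moment's care is $M^{seq}_{\Box}$, whose premise $\phi \seq \psi$ has an empty context and whose conclusion $\Ga, \bx\phi \seq \bx\psi$ already carries an arbitrary context $\Ga$ — so adding a formula to the antecedent of the conclusion simply enlarges $\Ga$ and we reapply the same rule instance, while the axioms $Ax$ and $L\bot$ absorb weakening trivially.

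Next I would prove \emph{invertibility} of each rule other than $R\of$ and $L\!\imp$. For the left rules $L\en, L\of, R\en, R\!\imp$ and the succedent parts, the standard approach is induction on the height of the derivation of the conclusion: if the conclusion is an axiom the premises are too (using weakening admissibility where the principal formula is merely a side formula), and if the last rule acted on a formula other than the one we are inverting we apply the induction hypothesis to the premises and reapply the rule. The subtle point is that $R\of$ and $L\!\imp$ are genuinely non-invertible (they lose information about the disjunct chosen or the implication's antecedent), which is exactly why they are excluded; I would simply note that the other rules have principal connectives that can be reconstructed. I expect the $M^{seq}_{\Box}$ rule to again require a short separate argument, but since it is the only rule introducing $\bx$ and its premise is fixed, inverting the $\en,\of,\imp$ rules never interferes with it.

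For \emph{contraction admissibility} I would argue by induction on height together with the invertibility results just obtained: to contract two copies of a formula $\phi$ in the antecedent, I invert the last rule through each copy as needed and apply the induction hypothesis, the familiar pattern from $\mathbf{G3}$-style calculi. The delicate cases are the non-invertible rules $L\!\imp$ and $R\of$ when the contraction formula is their principal formula; here I would use the specific shape of $L\!\imp$ (whose left premise $\Ga, \phi\imp\psi \seq \phi$ retains a copy of the implication, so contraction on it follows by the induction hypothesis on the premises) rather than invertibility.

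Finally, for \emph{implication inversion} — derivability of $\Ga,\psi\seq\De$ from $\Ga,\phi\imp\psi\seq\De$ — I would proceed by induction on the height of the given derivation, and I expect this to be the main obstacle. The clean cases are those where $\phi\imp\psi$ is a side formula: apply the induction hypothesis to the premises and reapply the rule, using the free-context behaviour of the rules. The crux is the case where $\phi\imp\psi$ is the principal formula of an $L\!\imp$ step, so the derivation ends with premises $\Ga,\phi\imp\psi\seq\phi$ and $\Ga,\psi\seq\theta$; here the right premise is already $\Ga,\psi\seq\De$ (up to the succedent), giving the conclusion directly, while any residual copies of $\phi\imp\psi$ in the left premise are removed by the induction hypothesis. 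One must also check that $\bx\psi$ cannot be the principal formula of $M^{seq}_{\Box}$ in a way that blocks the argument, but since an implication $\phi\imp\psi$ is never of the form $\bx\chi$, the modal rule only ever treats $\phi\imp\psi$ as context, and weakening/contraction admissibility close the remaining bookkeeping.
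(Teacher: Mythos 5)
Your overall strategy (height-preserving weakening, then inversion, then contraction, then implication inversion, each by induction on the depth of derivations) is the same standard argument the paper gestures at---its entire proof is the single sentence that these closures are proved by induction on the depth of the derivation. But as you have ordered the steps, there is a genuine gap in the contraction case where the contracted formula $\phi\imp\psi$ is principal in a final $L\!\imp$ inference. There the premises are $\Ga,\phi\imp\psi,\phi\imp\psi \seq \phi$ and $\Ga,\phi\imp\psi,\psi \seq \De$. Your remark about the ``specific shape of $L\!\imp$'' disposes only of the left premise: the induction hypothesis contracts it to $\Ga,\phi\imp\psi \seq \phi$. It does nothing for the right premise. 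To reapply $L\!\imp$ and conclude $\Ga,\phi\imp\psi \seq \De$ you need $\Ga,\psi \seq \De$, but what you have is $\Ga,\phi\imp\psi,\psi \seq \De$; the stray copy of $\phi\imp\psi$ can only be removed by implication inversion (yielding $\Ga,\psi,\psi \seq \De$, after which $\psi$ is contracted by the induction hypothesis). In your ordering, implication inversion is proved last and leans on contraction, so the argument is circular---or at least this case of contraction cannot be closed as written.

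The fix is the standard one: implication inversion is nothing other than (height-preserving) invertibility of the right premise of $L\!\imp$, so it belongs with the other inversion lemmas, proved \emph{before} contraction, and it must be height-preserving so that the appeal to the contraction induction hypothesis on $\Ga,\psi,\psi \seq \De$ is legitimate. Your treatment of implication inversion itself is fine and needs no contraction: when $\phi\imp\psi$ is principal, the right premise of $L\!\imp$ is literally the desired sequent, and the side-formula cases go through the induction hypothesis. Two smaller points: $R\of$ is a right rule in a single-conclusion calculus, so it is never a delicate case for antecedent contraction; and $M^{seq}_{\Box}$ is not in fact invertible (take $\bot \in \Ga$: the conclusion is an axiom while $\phi \seq \psi$ need not be derivable), so the invertibility claim should be read as covering the propositional rules only, a point on which your hedged ``separate argument'' for the modal rule stays silent.
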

\begin{proof}
Closures under the structural rules and implication inversion are proved with induction to the depth of the derivation.
\end{proof}

A multiset is {\em irreducible} if it has no element that is a disjunction or a conjunction or falsum and for no atom $p$ does it contain both $p\imp\psi$ and $p$. A sequent $S$ is {\em irreducible} if $S^a$ is. A proof is {\em sensible} if its  last inference does not have a principal formula on the left of the form $p\imp\psi$ for some atom $p$ and formula $\psi$.\footnote{In \cite{Iemhoffb} the requirement that the principal formula be on the left was erroneously omitted.} A proof in $\mathbf{G3iM^w}$ is {\em strict} if in the last inference, in case it is an instance of $L\!\imp$ with principal formula $\bx\phi \imp \psi$, the left premise is an axiom or the conclusion of an application of the modal rule. 

\begin{lemma} 
 \label{lemstrict} 
Every irreducible sequent that is provable in $\mathbf{G3iM^w}$ has a sensible strict proof in $\mathbf{G3iM^w}$. 
\end{lemma}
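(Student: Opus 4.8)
The plan is to induct on the depth of a given $\mathbf{G3iM^w}$-proof $\pi$ of the irreducible sequent $S$, freely using the invertibilities and the closure under weakening, contraction and implication inversion from Lemma~\ref{lemimpconv} (all of which are depth non-increasing). The key initial observation is that sensibility and strictness only constrain the \emph{last} inference of $\pi$, and that the only rule able to violate either condition is $L\!\imp$: if the last inference is any of $Ax$, $L\bot$, $L\en$, $R\en$, $L\of$, $R\of$, $R\!\imp$ or $M^{seq}_{\Box}$, then its principal formula is never a left implication with atomic antecedent (so $\pi$ is sensible) and it is not an instance of $L\!\imp$ (so $\pi$ is vacuously strict), and we are done. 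Hence I only need to treat the case in which $\pi$ ends in $L\!\imp$ with principal formula $\sigma\imp\psi$, and I split on the shape of $\sigma$. When $\sigma$ is a conjunction, disjunction, implication, $\bot$ or $\top$, the same reasoning applies verbatim: the inference is sensible (its principal formula is not of the form $p\imp\psi$ for an atom $p$) and strict (its principal formula is not of the form $\bx\phi\imp\psi$), so $\pi$ already works. Only $\sigma$ atomic and $\sigma$ boxed remain.

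For $\sigma=\bx\phi$, the left premise is $\Ga,\bx\phi\imp\psi\seq\bx\phi$. Since its succedent $\bx\phi$ is neither atomic nor of a shape produced by any right rule other than $M^{seq}_{\Box}$, and since $S$ is irreducible (so no $\bot$ or conjunction/disjunction sits ready in the context), any proof of this premise either ends in $M^{seq}_{\Box}$ --- in which case the premise is a modal conclusion and $\pi$ is already strict and sensible --- or ends in a left rule acting on a context formula. In the latter case I would permute that left rule below the terminal $L\!\imp$, which is legitimate because the contexts are free for substitution (the free-context property) and the two inferences act on disjoint principal formulas; the resulting proof ends in a rule to which the induction hypothesis applies.

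The hard part is $\sigma=p$ atomic. Here irreducibility of $S$ is essential: since $p\imp\psi$ occurs in the antecedent, $p\notin\Ga$, so the left premise $\Ga,p\imp\psi\seq p$ must produce the atom $p$ on the right without $p$ being available in the context. The plan is to trace how $p$ is introduced: as $p$ is atomic it can only be discharged by an axiom, which forces $p$ to enter the antecedent through the decomposition of some context formula (a boxed implication being a typical productive source). Using implication inversion on the left premise and permuting this productive decomposition to the bottom, I would rewrite $\pi$ so that the offending $L\!\imp$ on $p\imp\psi$ is no longer last; crucially, in the relevant subproof the atom $p$ is by then present in the antecedent, so the left premise $\ldots,p,p\imp\psi\seq p$ becomes an axiom and the implication is absorbed harmlessly. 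The resulting proof ends in a benign rule (or a boxed $L\!\imp$ handled as above), and the induction hypothesis finishes the argument. This atomic case is exactly the phenomenon that forces Dyckhoff to replace the general $L\!\imp$ by the restricted rule $(Lp\!\to)$ in $\mathbf{G4ip}$, and it is where the bookkeeping is most delicate --- indeed it is precisely the point flagged by the footnote about the condition on the principal formula being on the left.

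I expect the main obstacle to be controlling the induction measure through these permutations: a naive permutation of inferences need not decrease the depth of the proof, and $\prec$ is not a legal induction order for $\mathbf{G3iM^w}$ (its $L\!\imp$ rule has a left premise that is not lower than the conclusion in $\prec$, which is exactly why $\mathbf{G3iM^w}$ fails to terminate). I would therefore phrase the induction on proof depth, relying on the depth non-increasing character of the structural admissibilities of Lemma~\ref{lemimpconv}, and, where a bare permutation threatens to stall, feed the inverted and contracted subproofs --- which do drop in depth --- into the induction hypothesis rather than permuting blindly.
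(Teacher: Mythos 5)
Your reduction to a terminal $L\!\imp$, and the observation that only $L\!\imp$ can violate sensibility or strictness, match the paper, and permutation-plus-implication-inversion is indeed the engine of its proof. But your proposal is missing the one idea that makes the argument close: you never apply the induction hypothesis (in the paper: minimality of a counterexample, measured by the length of the \emph{leftmost branch}, not the depth) to the \emph{left premise} $\Ga,\phi\imp\psi \seq \phi$ of the terminal $L\!\imp$. That premise has the same, hence irreducible, antecedent and a strictly shorter proof, so one may assume its proof $\cald_1$ is itself sensible and strict. Everything hinges on this: since the succedent $\phi$ is an atom not in $\Ga$ (by irreducibility) or a boxed formula, and the antecedent contains no conjunction, disjunction or $\bot$, the proof $\cald_1$ must end either in the modal rule (then the original proof is already strict and sensible) or in an $L\!\imp$ on some $\phi'\imp\psi'$ where, \emph{by sensibility of} $\cald_1$, $\phi'$ is not an atom, and where, \emph{by strictness of} $\cald_1$, its left premise is an axiom or a modal conclusion whenever $\phi'$ is boxed. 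Only these two facts make the permuted proof --- terminal $L\!\imp$ on $\phi'\imp\psi'$, with the right premise repaired via implication inversion (Lemma~\ref{lemimpconv}), which is what actually licenses the permutation, not any ``free-context'' consideration --- sensible and strict \emph{outright}, with no further appeal to an induction hypothesis on the rearranged proof. Without this step, the left rule you permute down may itself be an $L\!\imp$ with atomic antecedent; the permuted proof then violates sensibility, no measure has decreased, and you are stuck in exactly the stalling you acknowledge in your last paragraph but do not resolve.

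Second, your separate treatment of the atomic case $\sigma=p$ is both unnecessary and unsound as sketched. Unnecessary, because the paper handles $\phi$ atomic and $\phi$ boxed by the \emph{same} permutation: irreducibility gives $p\notin S^a$, so the left premise is not an axiom, and its sensible strict proof again ends in $L\!\imp$ on some $\phi'\imp\psi'$ with $\phi'$ non-atomic, after which the identical rearrangement applies. Unsound, because after permuting one inference down there is no reason the atom $p$ has entered the antecedent, so your claim that the inner left premise $\ldots,p,\,p\imp\psi\seq p$ ``becomes an axiom'' is unjustified; nothing in your plan forces $p$ to appear after finitely many permutations, and no terminating measure is offered for the proposed ``tracing'' of where $p$ is produced.
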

\begin{proof}
This is proved in the same way as the corresponding lemma (Lemma 1) in \cite{Dyckhoff}. Arguing by contradiction, assume that among all provable irreducible sequents that have no sensible strict proofs, $S$ is such a sequent with the shortest proof, $\cald$, where the {\em length} of a proof is the length of its leftmost branch. Thus the last inference in the proof is an application 
\[
 \infer{\Ga,\phi\imp \psi \seq \De}{
 \deduce[\cald_1]{\Ga, \phi\imp \psi \seq \phi}{} & 
 \deduce[\cald_2]{\Ga, \psi \seq \De}{} }
\]
of L$\imp$, where $\phi$ is an atom or a modal formula.  
Since $S^a$ is irreducible, $\bot \not\in S^a$ and if $\phi$ is an atom, $\phi\not\in S^a$. Therefore the left premise cannot be an instance of an axiom and hence is the conclusion of a rule, say $\rsch$. Since the succedent of the conclusion of $\rsch$ consists of an atom or a modal formula, $\rsch$ is a left rule or a right modal rule. The latter case cannot occur, since the proof then would be strict and sensible. Thus $\rsch$ is a left rule. 

We proceed as in \cite{Dyckhoff}. Sequent $(\Ga, \phi\imp \psi \seq \phi)$ is irreducible and has a shorter proof than $S$. Thus its subproof $\cald_1$ is strict and sensible. Since the sequent is irreducible and $\phi$ is an atom or a pure modal formula, the last inference of $\cald_1$ is L$\imp$ with a principal formula $\phi'\imp \psi'$ such that $\phi'$ is not an atom. Let $\cald'$ be the proof of the left premise $(\Ga, \phi\imp \psi \seq \phi')$. Thus the last part of $\cald$ looks as follows, where $\Pi,\phi'\imp \psi'=\Ga$.  
\[
 \infer{\Pi, \phi\imp \psi,\phi'\imp \psi' \seq \De}{
 \infer{\Pi, \phi\imp \psi,\phi'\imp \psi' \seq \phi}{
  \deduce[\cald']{\Pi, \phi\imp \psi,\phi'\imp \psi' \seq \phi'}{} & 
  \deduce[\cald'']{\Pi, \phi\imp \psi,\psi' \seq \phi}{}} & 
 \deduce[\cald_2]{\Pi, \psi,\phi'\imp \psi' \seq \De}{} }
\]
Consider the following proof of $S$.  
\[
 \infer{\Pi, \phi\imp \psi,\phi'\imp \psi' \seq \De}{
  \deduce[\cald']{\Pi, \phi\imp \psi,\phi'\imp \psi' \seq \phi'}{} & 
  \infer{\Pi, \phi\imp \psi,\psi'\seq \De}{ 
   \deduce[\cald'']{\Pi, \phi\imp \psi,\psi' \seq \phi}{} & 
   \deduce[\cald''']{\Pi,\psi,\psi' \seq \De}{} 
  } 
 }
\]
The existence of $\cald'''$ follows from Lemma~\ref{lemimpconv} and the existence of $\cald_2$. The obtained proof is strict and sensible: In case $\phi'$ is not a modal formula, this is straightforward. In case $\phi'$ is a modal formula, it follows from the fact that was observed above, namely that $\cald_1$ is strict and sensible. 
\end{proof}

\begin{theorem} 
 \label{thmequivalence}
$\mathbf{G3iM^w}$ and $\mathbf{G4iM^w}$ are equivalent (derive exactly the same sequents). 
\end{theorem}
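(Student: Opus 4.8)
The plan is to prove the two inclusions separately, writing $\mathbf{G3iM^w} \vdash S$ and $\mathbf{G4iM^w} \vdash S$ for derivability of a sequent $S$ in the two calculi. The inclusion $\mathbf{G4iM^w} \subseteq \mathbf{G3iM^w}$ is the easy direction and proceeds by showing that every rule of $\mathbf{G4iM^w}$ is admissible in $\mathbf{G3iM^w}$; the inclusion $\mathbf{G3iM^w} \subseteq \mathbf{G4iM^w}$ is the substantial direction and is handled by a well-founded induction on the weight order $\prec$, using the reduction to sensible strict proofs supplied by Lemma~\ref{lemstrict}. Since the two calculi share $Ax$, $L\bot$, $L\wedge$, $R\wedge$, $L\vee$, $R\vee$ and $R\to$, only the implication and modal rules require real work.

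For $\mathbf{G4iM^w} \subseteq \mathbf{G3iM^w}$ I would argue by induction on the height of the $\mathbf{G4iM^w}$-derivation, checking each remaining rule in turn: the premises are already in $\mathbf{G3iM^w}$ by the induction hypothesis, and I feed them into an instance of the corresponding $\mathbf{G3iM^w}$-rule, closing the gap with the structural principles of Lemma~\ref{lemimpconv} (weakening, contraction, invertibility, implication inversion). For $(Lp\!\to)$ the left premise of $L\to$ is the axiom $\Gamma,p,p\to\psi \Rightarrow p$; for $(L\wedge\!\to)$, $(L\vee\!\to)$ and $(L\!\to\!\to)$ one uses invertibility, weakening and implication inversion to turn the single $\mathbf{G4iM^w}$-premise into the two premises demanded by $L\to$ on the reassembled principal formula; and for the modal rules $M$ and $(LM\!\to)$ one appeals to $(M^{seq}_{\Box})$, weakening, and (for $(LM\!\to)$) a final application of $L\to$ on $\Box\psi\to\theta$. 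The explicit weakening rules of $\mathbf{G4iM^w}$ are simulated using the admissibility of weakening from Lemma~\ref{lemimpconv}, so this direction is routine.

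For the converse I would argue by induction on $(\Gamma \Rightarrow \Delta)$ in the well-founded order $\prec$. Given a $\mathbf{G3iM^w}$-proof, I first drive the antecedent to an irreducible sequent using invertibility (Lemma~\ref{lemimpconv}) of the weight-reducing rules $L\wedge$, $L\vee$, $L\bot$ together with the atomic contraction packaged in $(Lp\!\to)$; each such step strictly lowers the sequent in $\prec$, so the induction hypothesis applies to what remains and the result is rebuilt by the matching $\mathbf{G4iM^w}$-rule. On an irreducible sequent Lemma~\ref{lemstrict} supplies a sensible strict proof, and I case on its last inference. Right rules and the left rules on $\wedge$, $\vee$ lower the sequent in $\prec$ and are dispatched by the induction hypothesis directly. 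Sensibility excludes a last $L\to$ with principal formula $p\to\psi$ for atomic $p$, so the only remaining case is $L\to$ with principal $\phi\to\psi$ in which $\phi$ is a conjunction, a disjunction, an implication, or a boxed formula; these map respectively onto $(L\wedge\!\to)$, $(L\vee\!\to)$, $(L\!\to\!\to)$ and the new modal case.

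The two genuinely delicate sub-cases are $L\!\to\!\to$ and the boxed one. For $\phi=\phi_1\to\phi_2$ I would mirror Dyckhoff's argument, checking from the weight definition (in particular $w(\phi_1\wedge\phi_2)=w(\phi_1)+w(\phi_2)+2$ and $w(\Box\phi)=w(\phi)+1$) that the premises $\Gamma,\phi_2\to\psi \Rightarrow \phi_1\to\phi_2$ and $\Gamma,\psi \Rightarrow \Delta$ of $(L\!\to\!\to)$ are strictly $\prec$-lower than the conclusion, so that the induction hypothesis can be applied before reassembling with $(L\!\to\!\to)$; this weight bookkeeping, together with the use of implication inversion to produce the right premise, is the main obstacle. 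For $\phi=\Box\phi_0$ the key is that strictness forces the left premise $\Gamma,\Box\phi_0\to\psi \Rightarrow \Box\phi_0$ of $L\to$ to be an axiom or the conclusion of $(M^{seq}_{\Box})$; the axiom case is impossible on an irreducible antecedent, so the premise comes from $(M^{seq}_{\Box})$ and thereby exhibits a subderivation $\gamma \Rightarrow \phi_0$ together with a boxed antecedent formula $\Box\gamma$, which is exactly the data consumed by $(LM\!\to)$ (its second premise being the $\prec$-lower right premise $\Gamma,\beta\Rightarrow\Delta$ of the original $L\to$). Recognising that the strict-proof condition of Lemma~\ref{lemstrict} is precisely engineered to make this folding into the single rule $(LM\!\to)$ possible is, I expect, the crux of the whole argument and the point at which the modal content of $\mathsf{iM}$ enters.
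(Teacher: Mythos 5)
Your overall architecture coincides with the paper's (structural closure of $\mathbf{G3iM^w}$ for the easy direction; well-founded induction on $\prec$ plus Lemma~\ref{lemstrict} for the hard one), and your treatment of the boxed case is exactly the paper's: strictness plus irreducibility force the left premise of the final $L\!\to$ to come from $M^{seq}_{\Box}$, and the exhibited data $\chi \Rightarrow \varphi_0$, $\Box\chi \in \Gamma$ folds into $(LM\!\to)$. The genuine gap is that you never invoke closure under \emph{cut}, and the toolkit you do allow yourself (Lemma~\ref{lemimpconv}: weakening, contraction, invertibility, implication inversion) cannot replace it. Already in the direction you call routine this bites: to simulate $(L\wedge\!\to)$ you propose applying $L\!\to$ to the reassembled formula $(\varphi_1 \wedge \varphi_2) \to \psi$, but the left premise that $L\!\to$ then demands, $\Gamma, (\varphi_1 \wedge \varphi_2)\to\psi \Rightarrow \varphi_1 \wedge \varphi_2$, is not derivable in general, and implication inversion only deletes implications from antecedents --- it can never manufacture such a premise. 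The correct move, and the paper's, is to cut the $\mathbf{G4iM^w}$-premise $\Gamma, \varphi_1 \to (\varphi_2 \to \psi) \Rightarrow \Delta$ against the derivable sequent $(\varphi_1\wedge\varphi_2)\to\psi \Rightarrow \varphi_1\to(\varphi_2\to\psi)$, using cut-elimination for $\mathbf{G3iM^w}$ from Olivetti; the same applies to $(L\vee\!\to)$ and $(L\!\to\to)$.

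The omission is more damaging in the hard direction, because there the induction hypothesis only converts sequents \emph{already known} to be $\mathbf{G3iM^w}$-derivable into $\mathbf{G4iM^w}$-derivable ones. For $\gamma = \varphi_1 \wedge \varphi_2$ you must first establish that $\Gamma, \varphi_1 \to (\varphi_2 \to \psi) \Rightarrow \Delta$ is $\mathbf{G3iM^w}$-derivable (a cut), and for $\gamma = \varphi_1 \to \varphi_2$ the left premise $\Gamma, \varphi_2 \to \psi \Rightarrow \varphi_1 \to \varphi_2$ of $(L\!\to\to)$ requires the paper's auxiliary argument combining invertibility of $R\!\to$, the derivability of $\Gamma, \varphi_2\to\psi, \varphi_1, \varphi_1\to\varphi_2 \Rightarrow \psi$, and a cut. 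So your diagnosis that the ``weight bookkeeping'' is the main obstacle misses the crux: the weights are engineered precisely so that the $\prec$-decrease is immediate, and the real work is transferring derivability to the $\mathbf{G4iM^w}$-premises, which Lemma~\ref{lemimpconv} alone cannot do. Finally, your case split on $\gamma$ omits $\gamma = \bot$: in this paper $\bot$ is not an atom, so sensibility does not exclude $\bot \to \psi$ as principal formula, and $\mathbf{G4iM^w}$ has no rule for it; the paper dispatches this case by cutting $S$ against $(\ \Rightarrow \bot \to \psi)$ to get $\Gamma \Rightarrow \Delta$, applying the induction hypothesis, and weakening.
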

\begin{proof}
The proof is an adaptation of the proof of Theorem 3.4 in \cite{Iemhoffb}, which again is an adaptation of Theorem 1 in \cite{Dyckhoff}. Under the assumptions in the theorem we have to show that for all sequents $S$: 
$\af_{\mathbf{G3iM^w}} S$ if and only if $\af_{\mathbf{G4iM^w}} S$. 

The proof of the direction from right to left is straightforward because $\mathbf{G3iM^w}$ is closed under the structural rules. For weakening and contraction this is easy to see, and cut-elimination for  $\mathbf{G3iM^w}$ is proved in \cite{Olivetti}. For details, see \cite{Iemhoffb}
 
The other direction, left to right, is proved by induction on the order $\sml$ with respect to which $\mathbf{G4iM^w}$ is terminating, in a similar manner as in \cite{Iemhoffb}. So suppose $\mathbf{G3iM^w}\af S$. 
Sequents lowest in the order do not contain connectives or modal operators by definition of the weight function underlying $\sml$. Thus such sequents have to be instances of axioms, and since $\mathbf{G3iM^w}$ and $\mathbf{G4iM^w}$ have the same axioms, $S$ is provable in $\mathbf{G4iM^w}$. 

We turn to the case that $S$ is not the lowest in the order. 
If $S^a$ contains a conjunction, say $S = (\Ga,\phi_1\en\phi_2 \seq \De)$, then $S'=(\Ga,\phi_1,\phi_2\seq \De)$ is provable in $\mathbf{G3iM^w}$ by Lemma~\ref{lemimpconv}. As $\mathbf{G4iM^w}$ contains $L\en$ and $\mathbf{G4iM^w}$ is terminating, $S'\sml S$ follows. Hence $S'$ is provable in $\mathbf{G4iM^w}$ by the induction hypothesis. Thus so is $(\Ga,\phi_1\en\phi_2 \seq \De)$. A disjunction in $S^a$ as well as the case that both $p$ and $p\imp \phi$ belong to $S^a$, can be treated in the same way. 

Thus only the case that $S$ is irreducible remains, and by Lemmas~\ref{lemimpconv} and \ref{lemstrict} we may assume its proof in $\mathbf{G3iM^w}$ to be sensible and strict. The irreducibility of $S$ implies that the last inference of the proof is an application of a rule, $\rsch$, that is either a nonmodal right rule, a modal rule or $L\!\imp$. In the first two cases, $\rsch$ belongs to both calculi and the fact that $\mathbf{G4iM^w}$ is terminating implies that the premise(s) of $\rsch$ is lower in the order $\sml$ than $S$. Thus the induction hypothesis implies that the premise(s) is derivable in $\mathbf{G4iM^w}$, and since $\rsch$ belongs to $\mathbf{G4iM^w}$, the conclusion $S$ is derivable in $\mathbf{G4iM^w}$ as well. 

We turn to the third case. 
Suppose that the principal formula of the last inference is $(\gam \imp \psi)$ and $S = (\Ga,\gam\imp\psi \seq \De)$. Since the proof is sensible, $\gam$ is not atomic. 
We distinguish according to the main connective of $\gam$.  

If $\gam = \bot$, then $\Ga\seq \De$ is derivable in $\mathbf{G3iM^w}$ because of the closure under cut: $\mathbf{G3iM^w}$ derives $(\ \seq \bot \imp \psi)$, and so the cut 
\[
 \infer{\Ga\seq \De}{\ \seq \bot \imp \psi & \Ga,\bot\imp\psi \seq \De}
\]
shows that $\Ga\seq \De$ is derivable in $\mathbf{G3iM^w}$. Since $(\Ga\seq \De) \sml S$, it follows that $\Ga\seq \De$ is derivable in $\mathbf{G4iM^w}$ by the induction hypothesis. As $\mathbf{G4iM^w}$ is closed under weakening, $S$ is derivable in $\mathbf{G4iM^w}$ too. 

If $\gam = \phi_1 \en \phi_2$, then the fact that $S$ is derivable in $\mathbf{G3iM^w}$ implies the same for $S'=(\Ga,\phi_1 \imp (\phi_2\imp \psi) \seq \De)$, as $\mathbf{G3iM^w}$ is closed under cut. The fact that $\mathbf{G4iM^w}$ is terminating and contains $L\en\!\!\imp$ implies $S'\sml S$. Hence $S'$ is derivable in $\mathbf{G4iM^w}$ by the induction hypothesis. Thus so is 
$\Ga,\phi_1 \en \phi_2 \imp \psi\seq \De$ by an application of $L\en\!\!\imp$. The case that $\gam = \phi_1 \of \phi_2$ is analogous.

If $\gam = \phi_1 \imp \phi_2$, then because $\gam\imp\psi$ is the principal formula, 
both premises $S_1=(\Ga, \psi \seq \De)$ and $\Ga, \gam \imp \psi \seq \gam$ are derivable in $\mathbf{G3iM^w}$. Thus so is $\Ga, \gam \imp \psi,\phi_1 \seq \phi_2$ by the invertibility of $R\!\imp$ (Lemma~\ref{lemimpconv}). 
It is not hard to see that $\Ga,\phi_2\imp\psi,\phi_1, \phi_1 \imp \phi_2\seq \psi$ is derivable in $\mathbf{G3iM^w}$. Hence so is $\Ga, \phi_2\imp\psi,\phi_1 \seq \gam \imp \psi$. Together with $\Ga, \gam \imp \psi,\phi_1 \seq \phi_2$ and the fact that $\mathbf{G3iM^w}$ is closed under cut, this gives the derivability of $\Ga, \phi_2\imp\psi,\phi_1 \seq \phi_2$ in $\mathbf{G3iM^w}$, which implies that $S_2=\Ga, \phi_2\imp\psi \seq \gam$ is derivable in $\mathbf{G3iM^w}$. 

Since $S_1$ and $S_1$ are the premises of $L\!\imp\!\imp$ with conclusion $S$ in $\mathbf{G4iM^w}$, they both are lower in the order $\sml$ than $S$. Therefore they are derivable in $\mathbf{G4iM^w}$ by the induction hypothesis. And thus so is $S$ by an application of $L\!\imp\!\imp$. 

If $\gam = \bx\phi$, then the fact that the proof is strict and $S$ is irreducible implies that the left premise is the conclusion of the modal rule $\rsch$ with premises $\chi \seq \phi$. Thus the derivation looks as follows:
\[
 \AxiomC{$\cald_0$}\noLine
 \UnaryInfC{$\chi \seq \phi$}
 \UnaryInfC{$\Ga,\bx\phi \imp \psi,\bx\chi \seq \bx\phi$}
 \AxiomC{$\cald_2$} \noLine
 \UnaryInfC{$\Ga,\bx\chi,\psi \seq \De$}
 \BinaryInfC{$\Ga,\bx\phi\imp \psi,\bx\chi  \seq \De$}
 \DisplayProof
\]
Observe that $\mathbf{G4iM^w}$ contains the rule 
\[
 \AxiomC{$\chi \seq \phi$}
 \AxiomC{$\Ga,\bx\chi,\psi \seq \De$}
 \BinaryInfC{$\Ga,\bx\phi\imp \psi,\bx\chi  \seq \De$}
 \DisplayProof
\]
Since $\mathbf{G4iM^w}$ is terminating, it follows that both premises are below $S$ in the ordering. 
By the induction hypothesis they are derivable in $\mathbf{G4iM^w}$, say with derivations $\cald_0'$ and $\cald_2'$. Then the following is a proof of $S$ in $\mathbf{G4iM^w}$:
\[
 \AxiomC{$\cald_0'$}\noLine
 \UnaryInfC{$\chi \seq \phi$}
 \AxiomC{$\cald_2'$} \noLine
 \UnaryInfC{$\Ga,\bx\chi,\psi \seq \De$}
 \BinaryInfC{$\Ga,\bx\phi\imp \psi,\bx\chi  \seq \De$}
 \DisplayProof
\]
\end{proof}

\begin{lemma}
$\mathbf{G4iM^w}$ is closed under the structural rules, including cut. 
\end{lemma}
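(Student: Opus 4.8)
The plan is to transfer all the structural closure properties from $\mathbf{G3iM^w}$ to $\mathbf{G4iM^w}$ via the equivalence established in Theorem~\ref{thmequivalence}. The key observation is that each structural rule -- left and right weakening, contraction, and cut -- is a closure property of the \emph{set} of derivable sequents: it asserts that whenever certain sequents belong to this set, a related sequent also belongs to it. Since Theorem~\ref{thmequivalence} states that $\mathbf{G3iM^w}$ and $\mathbf{G4iM^w}$ derive exactly the same sequents, the two systems have the same set of derivable sequents, and hence any such closure property holds for one calculus if and only if it holds for the other.

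Concretely, I would argue as follows for a generic structural rule with premises $S_1, \ldots, S_n$ and conclusion $S$. Suppose $S_1, \ldots, S_n$ are all derivable in $\mathbf{G4iM^w}$. By the right-to-left direction of Theorem~\ref{thmequivalence}, they are derivable in $\mathbf{G3iM^w}$. I then invoke the relevant closure property of $\mathbf{G3iM^w}$: closure under weakening and contraction is supplied by Lemma~\ref{lemimpconv}, and closure under cut follows from the cut-elimination result for $\mathbf{G3iM^w}$ established in \cite{Olivetti} (the very fact already used repeatedly in the proof of Theorem~\ref{thmequivalence}). This yields a derivation of $S$ in $\mathbf{G3iM^w}$. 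Applying Theorem~\ref{thmequivalence} once more, now in the left-to-right direction, produces a derivation of $S$ in $\mathbf{G4iM^w}$, as required. Since this reasoning is uniform in the rule, it covers weakening, contraction, and cut simultaneously.

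There is essentially nothing to compute here: the argument is a direct two-way application of the equivalence together with facts already in hand, so I do not expect a genuine obstacle. The only point requiring care is to match each structural rule with the correct source -- weakening and contraction with Lemma~\ref{lemimpconv}, and cut with the cut-elimination for $\mathbf{G3iM^w}$ -- and to note that no direct cut-elimination argument on $\mathbf{G4iM^w}$ itself is needed. The entire burden is carried by the equivalence of the two calculi, which lets the well-behavedness of the $\mathbf{G3}$-style system be inherited by the terminating $\mathbf{G4}$-style one.
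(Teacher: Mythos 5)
Your transfer scheme is the natural reading of this lemma, and for cut (and for contraction) it is exactly right: the paper states the lemma without proof immediately after Theorem~\ref{thmequivalence}, and for cut there is no realistic alternative to shuttling the premises into $\mathbf{G3iM^w}$, invoking the cut-elimination of \cite{Olivetti} together with Lemma~\ref{lemimpconv}, and returning via the equivalence.

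The gap is in applying the same scheme to weakening: that is circular relative to the paper's development. The proof of Theorem~\ref{thmequivalence} itself already appeals to closure of $\mathbf{G4iM^w}$ under weakening -- in the case $\gamma = \bot$ of the left-to-right induction, after the induction hypothesis yields a $\mathbf{G4iM^w}$-derivation of $\Gamma \Rightarrow \Delta$, the argument concludes with ``As $\mathbf{G4iM^w}$ is closed under weakening, $S$ is derivable in $\mathbf{G4iM^w}$ too.'' So weakening admissibility in $\mathbf{G4iM^w}$ is a prerequisite of the equivalence theorem and cannot be derived from it without a vicious circle. It needs a direct proof, which is routine here: every rule of $\mathbf{G4iM^w}$ enjoys the free-context property, so by induction on the derivation one can add the extra formulas to the antecedent contexts of every sequent in the derivation (and likewise fill an empty succedent for right weakening). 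With weakening established directly, Theorem~\ref{thmequivalence} stands on its own, and your two-way transfer then soundly delivers contraction and cut, since neither of those is presupposed in the proof of the equivalence. In short: keep your argument for contraction and cut, but the weakening case must be handled by a direct induction on derivations, contrary to your closing claim that the entire burden can be carried by the equivalence.
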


\begin{thm}
The systems $\mathbf{G3iM^w}$ and $\mathbf{G4iM}$ are equivalent.
\end{thm}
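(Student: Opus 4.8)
The plan is to reduce the statement to Theorem~\ref{thmequivalence} together with the lemma immediately preceding it. Since that theorem already shows that $\mathbf{G3iM^w}$ and $\mathbf{G4iM^w}$ derive exactly the same sequents, it suffices to prove that $\mathbf{G4iM^w}$ and $\mathbf{G4iM}$ are equivalent and then chain the two equivalences. These two $\mathbf{G4}$-style calculi agree on every rule except the modal one: $\mathbf{G4iM^w}$ carries the context-laden rule $M^{seq}_{\bx}$, whereas $\mathbf{G4iM}$ is built over $\mathbf{G4w}$ with the context-free rule $M$ and the explicit weakening rules $Lw$ and $Rw$. The rule $LM\!\!\to$ and all rules inherited from $\mathbf{G4ip}$ belong to both systems, so the whole argument comes down to bridging the two formulations of the modal rule.

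First I would show that $\af_{\mathbf{G4iM}} S$ implies $\af_{\mathbf{G4iM^w}} S$, by induction on the height of the $\mathbf{G4iM}$-derivation. For the rules common to both calculi the induction step is immediate. The context-free rule $M$ is exactly the instance of $M^{seq}_{\bx}$ with empty context, hence available in $\mathbf{G4iM^w}$. The weakening rules of $\mathbf{G4iM}$ are absorbed using the closure of $\mathbf{G4iM^w}$ under the structural rules, which is the content of the preceding lemma. Thus every inference of the $\mathbf{G4iM}$-derivation can be reproduced in $\mathbf{G4iM^w}$.

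Conversely, I would show that $\af_{\mathbf{G4iM^w}} S$ implies $\af_{\mathbf{G4iM}} S$, again by induction on derivation height. The only rule requiring attention is $M^{seq}_{\bx}$. Given a derivation of its premise $\phi \seq \psi$, one first applies the context-free rule $M$ to obtain $\bx\phi \seq \bx\psi$, and then reintroduces the context $\Ga$ by finitely many applications of the left weakening rule $Lw$, arriving at $\Ga, \bx\phi \seq \bx\psi$. Every other rule of $\mathbf{G4iM^w}$ already belongs to $\mathbf{G4iM}$. Combining the two inclusions yields $\mathbf{G4iM^w} \equiv \mathbf{G4iM}$, and composing this with Theorem~\ref{thmequivalence} gives the equivalence of $\mathbf{G3iM^w}$ and $\mathbf{G4iM}$.

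The substantive content lies not in this theorem but in the lemma it invokes: once weakening is known to be admissible in $\mathbf{G4iM^w}$, the equivalence is a short bridging argument. The only point that I expect to need care is the direction $\mathbf{G4iM} \subseteq \mathbf{G4iM^w}$, where it is precisely the \emph{admissibility} of weakening in $\mathbf{G4iM^w}$ (not merely its presence as a primitive rule) that licenses translating the $Lw$ and $Rw$ steps; this is exactly what the preceding lemma supplies.
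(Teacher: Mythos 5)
Your proposal is correct and takes essentially the same route as the paper: the paper also reduces the statement, via Theorem~\ref{thmequivalence}, to the equivalence of $\mathbf{G4iM^w}$ and $\mathbf{G4iM}$, treats the inclusion of $\mathbf{G4iM^w}$ in $\mathbf{G4iM}$ as clear (your simulation of $M^{seq}_{\Box}$ by $M$ plus $Lw$ steps), and obtains the converse inclusion from the admissibility of weakening in $\mathbf{G4iM^w}$ given by the preceding lemma. Your write-up merely spells out the height inductions that the paper's terse proof leaves implicit.
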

\begin{proof}
To show that $\mathbf{G3iM^w}$ and $\mathbf{G4iM}$ are equivalent, it suffices to show that $\mathbf{G4iM^w}$ and $\mathbf{G4iM}$ are equivalent. That every sequent derivable in $\mathbf{G4iM^w}$ is derivable in $\mathbf{G4iM}$ is clear. For the other direction it suffices to show that weakening is admissible in $\mathbf{G4iM^w}$ which already has been investigated.   
\end{proof}

\section{Uniform Lyndon interpolation}
 \label{sectionuniform}
In this section, we will prove that the logic $\mathsf{iM}$ enjoys ULIP. To this end, we will provide a stronger variant of ULIP for the sequent calculus $\mathbf{G4iM}$ and prove that the system has that property.
From now on, when we say a sequent is derivable, we mean it is derivable in $\mathbf{G4iM}$, unless specified otherwise.

\begin{theorem}\label{ThmULIPG4iM}
$\mathbf{G4iM}$ has \emph{ULIP}, i.e., for any sequent $S$, multiset $\Sigma$, atom $p$, and $\circ \in \{+, -\}$, there exist formulas $\Ap S$ and $\Ep \Sigma$ such that:
\item[$(var)$] 
$\Ap S$ and  $\Ep \Sigma$ are $p^{\circ}$-free and $V^{\dagger}(\Ap S) \subseteq V^{\dagger}(S)$ and $V^{\dagger}(\Ep \Sigma) \subseteq V^{\dagger}(\Sigma)$, for any $\dagger \in \{+, -\}$,
\begin{enumerate}
\item \label{exists1}
$\Sigma \Rightarrow \Ep \Sigma$ is derivable,
\item \label{exists2}
for any sequent $\bar{C} \Rightarrow \bar{D}$ where $\bar{D}$ has at most one formula and
$p \notin V^{\circ}(\bar{C} \Rightarrow \bar{D})$ if $\Sigma, \bar{C} \Rightarrow \bar{D}$  is derivable, then $(\Ep \Sigma, \bar{C} \Rightarrow \bar{D})$ is also derivable.
\item \label{forall1}
$S \cdot (\Ap S \Rightarrow)$ is derivable, 
\item \label{forall2}
for any multiset $\bar{C}$ such that $p \notin V^{\circ}(\bar{C})$ if $S \cdot (\bar{C} \Rightarrow)$ is derivable, then $(\bar{C}, \Epd S^a \Rightarrow \Ap S)$ is also derivable,
\end{enumerate}
$\Ap S $ (resp., $\Ep \Sigma $) is called a 
{\em uniform $\A_p^{\circ}$-interpolant of $S$} (resp., {\em uniform $\E_p^{\circ}$-interpolant of $\Sigma$}).
\end{theorem}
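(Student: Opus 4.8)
The plan is to follow the Pitts-style strategy that the authors have used for their other logics: define the interpolants $\Ep \Sigma$ and $\Ap S$ by a simultaneous recursion on the well-founded order $\prec$ underlying the termination of $\mathbf{G4iM}$, and then verify $(var)$ and $(\ref{exists1})$--$(\ref{forall2})$ by induction on the same order. Because clause $(\ref{forall2})$ already refers to $\Epd S^a$, the recursion cannot be carried out for a single polarity in isolation, so I would define all four families $\Ep,\Epd,\Ap,\Apd$ at once, i.e.\ both $\circ$ and its dual $\diamond$ simultaneously. The recursion is legitimate because, as noted at the end of Section~\ref{sectionsequent} and when $\mathbf{G4iM}$ was introduced, every premise of every rule is strictly $\prec$-lower than its conclusion, so every recursive call is on a strictly smaller argument.

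For the shape of the definitions I would let $\Ep \Sigma$ be a conjunction with one conjunct per rule of $\mathbf{G4iM}$ backward applicable with principal formula in $\Sigma$, together with a disjunction recording which $p^{\circ}$-free atoms and right-rule reductions $\Sigma$ can prove; dually, $\Ap S$ is a disjunction. For the propositional connectives these conjuncts and disjuncts are exactly the ones from the $\mathbf{G4ip}$ analysis, built from recursive $\E$- and $\A$-interpolants of the (lower) premises and restricted to the $p^{\circ}$-free fragment, so that $p^{\circ}$-freeness holds by construction. The new ingredients are the modal conjuncts: for each $\bx\phi \in \Sigma$ the rule $M$ lets $\Sigma$ prove $\bx\chi$ whenever $\phi \seq \chi$, and since $\bx$ is monotone (which is exactly what $M$ provides) the strongest $p^{\circ}$-free box-consequence is captured by the conjunct $\bx(\Ep \phi)$, where $\Ep\phi$ is the interpolant of the singleton $\{\phi\}$; the rule $LM\!\to$ contributes a further conjunct assembled from the interpolants of its two premises.

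Turning to the verification, $(var)$ follows by a structural induction on the definitions: the variable-preserving property of Section~\ref{sectionsequent} gives $\bigcup_i V^{\circ}(S_i) \subseteq V^{\circ}(S)$ for every rule instance, so each recursive call stays within the variable bounds of $\Sigma$ (resp.\ $S$), and $p^{\circ}$-freeness is immediate because only $p^{\circ}$-free pieces are ever included. Properties $(\ref{exists1})$ and $(\ref{forall1})$, the soundness directions, are proved by induction on $\prec$ by reassembling, for each conjunct or disjunct, the rule instance it came from (for the box conjunct, $\phi \seq \Ep\phi$ from the induction hypothesis followed by $M$ and weakening). Properties $(\ref{exists2})$ and $(\ref{forall2})$, the minimality and maximality directions, are the substantial ones: assuming $\Sigma, \bar C \seq \bar D$ (resp.\ $S \cdot (\bar C \seq)$) is derivable, I would argue by a subsidiary induction on that derivation, inspecting its last rule. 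If the rule is principal on a formula of the $p^{\circ}$-free context $\bar C \seq \bar D$, the free-context property lets me apply the induction hypothesis and re-apply the rule; if it is principal on a formula of $\Sigma$ (resp.\ of $S$), it matches one of the conjuncts (resp.\ disjuncts) of the interpolant, and termination guarantees its premises are $\prec$-lower, so the outer induction hypothesis applies.

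The hard part will be the modal case of the minimality and maximality directions. Since $M$ discards all context, an application of $M$ or $LM\!\to$ inside the assumed derivation must be matched against a box-conjunct that was generated from a single-formula sequent $\phi \seq \psi$, and one has to check that the recursively chosen interpolant of $\phi$ is exactly strong enough (resp.\ weak enough) while remaining $p^{\circ}$-free and within the variable bounds. This is also where the two polarities are forced to interact: the coupling between $\A$ and $\E$ demanded by the occurrence of $\Epd S^a$ in $(\ref{forall2})$ is propagated across $\bx$ by the non-contextual modal premises of $M$ and $LM\!\to$, so the delicacy lies in keeping the recursions for $\circ$ and $\diamond$ in step through the modality. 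Finally, the rule $(Lp\!\to)$, which lies outside $\mathbf{G4w}^-$ and enjoys only the weaker variable-preserving property, needs a separate but routine argument patterned on the propositional case.
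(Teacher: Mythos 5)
Your plan reproduces the paper's architecture exactly: all four interpolants are defined by one simultaneous recursion on $\prec$, with $\Ep \Sigma$ a conjunction and $\Ap S$ a disjunction indexed by backward-applicable rules together with an axiom component (which the paper isolates as Lemma~\ref{LemAxiom}); conditions \eqref{exists1} and \eqref{forall1} are verified by the outer induction on $\prec$, and \eqref{exists2} and \eqref{forall2} by an inner induction on the given derivation, splitting on whether the principal formula lies in the interpolated part or in the $p^{\circ}$-free context, with the free-context and variable preserving properties doing the work you describe, and with $(Lp\!\to)$ handled separately. The conjunct $\Box \Ep \phi$ for $\Box\phi \in \Sigma$ is also exactly the paper's.

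However, there is a genuine gap at the very point you set aside as ``the hard part'': the modal components are never constructed, and the recipe you do give for them fails. You propose one conjunct per backward-applicable rule of $\mathbf{G4iM}$, with the $LM\!\to$ conjunct ``assembled from the interpolants of its two premises'' in the same pattern as the propositional rules. That pattern is unsound here: the generic left-rule conjunct relies on the fact that in $\mathbf{G4w}^-$ the context $\Gamma$ occurs in \emph{every} premise, so side formulas can be pushed into all premises at once; the left premise $\phi \seq \psi$ of $LM\!\to$ (and of $M$) carries no context, so the Pitts-style conjunct $(\Ep \phi \to \Apd (\phi \seq \psi)) \to \Ep \Sigma_{\Box\psi,\theta}$ is in general \emph{not} derivable from $\Sigma$ --- with $\phi$ trapped under $\Box$, a hypothesis about unboxed $\phi$ is unusable in $\mathsf{iM}$ --- and condition \eqref{exists1} already breaks. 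This is precisely why the paper restricts its first big conjunction to the propositional left rules and introduces bespoke modal conjuncts: for $\Box\psi \to \theta \in \Sigma$ the conjunct $\Box \Apd S_{\psi} \to \Ep \Sigma_{\Box\psi,\theta}$, whose \emph{boxed} antecedent makes it derivable from $\Sigma$ by one application of $LM\!\to$ (since $\Apd S_{\psi} \seq \psi$) and which in the verification absorbs a context box $\Box\phi \in \bar{C}$ via $\phi \seq \Apd S_{\psi}$, obtained from the dual-polarity induction hypothesis for \eqref{forall2}; and, for the case that both $\Box\phi$ and $\Box\psi \to \theta$ lie in $\Sigma$, the additional conjuncts $\Ep \Sigma_{\Box\psi,\theta}$ indexed by $I_m(\Sigma)$, i.e.\ guarded by provability of $\phi \seq \psi$ --- a guard that is well-defined only because the calculus terminates, hence is decidable. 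Dually, $\Apm S$ needs the special clause $\Box \Ap S_{\psi}$ when $S = ({\seq} \Box\psi)$ and the disjuncts $\Ap S_{\Box\psi,\theta} \wedge \Box \Ap S_{\psi}$ and (over $I_m(S^a)$) $\Ap S_{\Box\psi,\theta}$. These constructions are the actual mathematical content of the theorem; without them your plan is the standard Pitts scaffold, which by itself does not go through for $\mathsf{iM}$.
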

Let us first derive the main result of the paper as an immediate corollary.
\begin{corollary}\label{corULIP}
$\mathsf{iM}$ has ULIP. Hence, it also has UIP and LIP.
\end{corollary}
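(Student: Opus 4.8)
The plan is to read off the logic-level property ULIP for $\mathsf{iM}$ directly from the sequent-level Theorem~\ref{ThmULIPG4iM}, via two instantiations and the standard correspondence between provability in the logic and derivability of sequents. First I would record the bridge: by Theorem~\ref{thmequivalence} together with the equivalence of $\mathbf{G4iM^w}$ and $\mathbf{G4iM}$, the logic of $\mathbf{G4iM}$ is exactly $\mathsf{iM}$, and since $\mathbf{G4iM}$ contains $R\!\to$ and is closed under cut, we have $\mathsf{iM}\vdash\phi\to\psi$ iff the sequent $\phi\Rightarrow\psi$ is derivable in $\mathbf{G4iM}$.

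Now fix a formula $\phi$, an atom $p$ and $\circ\in\{+,-\}$, and write $S:=(\Rightarrow\phi)$ for the sequent with empty antecedent and succedent $\phi$. I would define the logic-level interpolants as $\Ep\phi:=\Ep\{\phi\}$ and $\Ap\phi:=\Ap S$, borrowing the sequent-level interpolants supplied by Theorem~\ref{ThmULIPG4iM}. The variable clause $(var)$ immediately gives $p^\circ$-freeness of both, together with $V^\dagger(\Ep\phi)\subseteq V^\dagger(\{\phi\})=V^\dagger(\phi)$ and, since $S^a=\varnothing$ forces $V^\dagger(S)=V^\dagger(\phi)$, also $V^\dagger(\Ap\phi)\subseteq V^\dagger(\phi)$; these are exactly the variable requirements of Definition~\ref{DfnUniformInterpolation}.

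The four provability clauses would then be matched one by one. Clause $(i)$ is condition~\ref{exists1} for $\Sigma=\{\phi\}$, and clause $(iii)$ is condition~\ref{forall1} for $S$, since $S\cdot(\Ap S\Rightarrow)=(\Ap\phi\Rightarrow\phi)$. For clause $(ii)$, given a $p^\circ$-free $\psi$ with $\mathsf{iM}\vdash\phi\to\psi$, I would invoke condition~\ref{exists2} with $\bar C=\varnothing$ and $\bar D=\{\psi\}$: the side condition $p\notin V^\circ(\bar C\Rightarrow\bar D)$ collapses to $p\notin V^\circ(\psi)$ because the antecedent is empty, and this is precisely $p^\circ$-freeness of $\psi$; the conclusion $\Ep\{\phi\}\Rightarrow\psi$ translates to $\mathsf{iM}\vdash\Ep\phi\to\psi$. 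For clause $(iv)$, given a $p^\circ$-free $\psi$ with $\mathsf{iM}\vdash\psi\to\phi$, I would invoke condition~\ref{forall2} with $\bar C=\{\psi\}$, whose hypothesis $S\cdot(\bar C\Rightarrow)=(\psi\Rightarrow\phi)$ is derivable.

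The only delicate point — and the main obstacle — is the shape of the conclusion of condition~\ref{forall2}, namely $(\bar C,\Epd S^a\Rightarrow\Ap S)$, which carries the extra term $\Epd S^a$. Here $S^a$ is empty, so by $(var)$ the formula $\Epd S^a$ contains no variables and, by condition~\ref{exists1} with $\Sigma=\varnothing$, the sequent $\Rightarrow\Epd S^a$ is derivable. Cutting this against $(\psi,\Epd S^a\Rightarrow\Ap\phi)$ — using that $\mathbf{G4iM}$ is closed under cut — yields $\psi\Rightarrow\Ap\phi$, i.e.\ $\mathsf{iM}\vdash\psi\to\Ap\phi$, which is clause $(iv)$. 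This completes the verification of ULIP for $\mathsf{iM}$; the concluding assertion that $\mathsf{iM}$ also enjoys UIP and LIP is then immediate from Theorem~\ref{ThmULIPresultsUIP}.
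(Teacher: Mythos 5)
Your proposal is correct and follows essentially the same route as the paper: both instantiate Theorem~\ref{ThmULIPG4iM} with $\Sigma=\{\phi\}$ and $S=(\Rightarrow\phi)$, discharge the extra term $\Epd S^a=\Epd\varnothing$ via its derivability (the paper phrases this as $\Epd\varnothing$ being provably equivalent to $\top$, you phrase it as a cut), and conclude UIP and LIP from Theorem~\ref{ThmULIPresultsUIP}. The only cosmetic difference is that you spell out the bridge between $\mathsf{iM}$-provability and $\mathbf{G4iM}$-derivability and the case of clause $(ii)$, which the paper leaves implicit.
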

\begin{proof}
By Theorem \ref{ThmULIPG4iM}, $\mathbf{G4iM}$ has ULIP. Hence, for $\Sigma=\varnothing$, by \eqref{exists1}, we know that $(\Rightarrow \Ep \varnothing)$ is derivable.
Therefore, $\Ep \varnothing$ is provably equivalent to $\top$. Now, set $\Ap A=\Ap (\Rightarrow A)$ and $\Ep A=\Ep \{A\}$. First, by \eqref{forall1}, we know that $\Ap (\Rightarrow A) \Rightarrow A$ is derivable. Hence, $\mathsf{iM} \vdash \Ap A \to A$. Secondly, for any $p^{\circ}$-free formula $B$, if $\mathsf{iM} \vdash B \to A$, then $B \Rightarrow A$ is derivable in $\mathbf{G4iM}$. Therefore, by \eqref{forall2}, when $\bar{C}=\{B\}$, we get the derivability of $B, \Epd \varnothing \Rightarrow Ap (\Rightarrow A)$. Since $\Epd \varnothing$ is provably equivalent to $\top$, we get $B \Rightarrow \Ap (\Rightarrow A)$ and hence, $\mathsf{iM} \vdash B \to \Ap A$. Therefore, $\Ap A$ satisfies the conditions in Definition \ref{DfnUniformInterpolation}. The case for $\Ep A$ is easier and will be skipped here.
The second part of the corollary is a result of Theorem \ref{ThmULIPresultsUIP}.
\end{proof}

Now, to prove Theorem \ref{ThmULIPG4iM}, we need the following lemma.

\begin{lem}\label{LemAxiom}
$\mathbf{G4iM}$ enjoys \emph{ULIP with respect to the axioms}, i.e., for any sequent $S$, multiset $\Sigma$, atom $p$, and $\circ \in \{+, -\}$, there exist formulas $\Apax S$ and $\Epax \Sigma$ such that they satisfy conditions $(var)$, \eqref{exists1}, and \eqref{forall1} in Theorem \ref{ThmULIPG4iM} and
\begin{description}
\item[$(ii')$]
for any sequent $\bar{C} \Rightarrow \bar{D}$ such that $p \notin V^{\circ}(\bar{C} \Rightarrow \bar{D})$, if $\Sigma, \bar{C} \Rightarrow \bar{D}$ is \emph{an axiom} in $\mathbf{G4iM}$ then $(\Epax \Sigma, \bar{C} \Rightarrow \bar{D})$ is derivable,
\item[$(iv')$]
for any multiset $\bar{C}$ such that $p \notin V^{\circ}(\bar{C})$, if $S \cdot (\bar{C} \Rightarrow)$ is \emph{an axiom} in $\mathbf{G4iM}$ then $(\bar{C} \Rightarrow \Apax S)$ is derivable.
\end{description}
\end{lem}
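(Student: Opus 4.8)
The plan is to construct the interpolants $\Epax\Sigma$ and $\Apax S$ by hand, directly inspecting the two axiom schemes $Ax$ (i.e.\ $\Gamma,q\Rightarrow q$ for an atom $q$) and $L\bot$ (i.e.\ $\Gamma,\bot\Rightarrow\phi$), since these are the only axioms of $\mathbf{G4iM}$. The guiding intuition is that $\Epax\Sigma$ must record exactly the atomic and $\bot$-information that $\Sigma$ can contribute to closing a sequent by an axiom, while $\Apax S$ must record what an auxiliary context $\bar{C}$ has to supply in order to close $S$ by an axiom.

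For the existential interpolant I would set $\Epax\Sigma$ to be the conjunction of $\bot$ (included precisely when $\bot\in\Sigma$) together with all atoms $q$ occurring in $\Sigma$, where in the case $\circ=+$ the atom $p$ is omitted from this conjunction (an empty conjunction being read as $\top$). Condition \eqref{exists1} then holds because $\Sigma\Rightarrow q$ is an instance of $Ax$ for each atomic conjunct $q\in\Sigma$ and $\Sigma\Rightarrow\bot$ is an instance of $L\bot$, so $R\wedge$ assembles $\Sigma\Rightarrow\Epax\Sigma$. For $(ii')$, if $\Sigma,\bar{C}\Rightarrow\bar{D}$ is an axiom then either the witness already lies in $\bar{C}$, in which case $\Epax\Sigma,\bar{C}\Rightarrow\bar{D}$ is the very same axiom, or the witness lies in $\Sigma$, i.e.\ $\bar{D}=\{r\}$ with $r\in\Sigma$ an atom, or $\bot\in\Sigma$; in the latter two cases the relevant conjunct of $\Epax\Sigma$ yields $\Epax\Sigma\Rightarrow\bar{D}$ and we weaken in $\bar{C}$ using the admissibility of weakening in $\mathbf{G4iM}$. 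Condition $(var)$ is immediate, since $V^-(\Epax\Sigma)=\varnothing$ and $V^+(\Epax\Sigma)$ is a set of atoms of $\Sigma$ not containing $p$ when $\circ=+$.

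For the universal interpolant I would define $\Apax S$ by a case distinction on $S=(S^a\Rightarrow S^s)$: put $\Apax S=\top$ when $S$ is already an axiom (that is, $\bot\in S^a$, or $S^s=\{r\}$ with an atom $r\in S^a$); put $\Apax S=r$ when $S^s=\{r\}$ is a single atom with $r\notin S^a$ that is admissible (i.e.\ $\circ=-$, or $\circ=+$ and $r\ne p$); and put $\Apax S=\bot$ in all remaining cases, in particular when $S^s$ is non-atomic or empty and when $S^s=\{p\}$ with $\circ=+$. Condition \eqref{forall1} is then checked case by case: $S^a,\top\Rightarrow S^s$ remains an axiom, $S^a,r\Rightarrow r$ is an instance of $Ax$, and $S^a,\bot\Rightarrow S^s$ is an instance of $L\bot$. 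For $(iv')$ note that, since $S$ is not an axiom in the last two cases, a $p^{\circ}$-free $\bar{C}$ can close $S^a,\bar{C}\Rightarrow S^s$ only by supplying the succedent atom $r$ (forcing $r\in\bar{C}$) or by supplying $\bot$ (forcing $\bot\in\bar{C}$); in each situation the target $\bar{C}\Rightarrow\Apax S$ is again an axiom or follows by $L\bot$. The degenerate case of an empty succedent is easy, as then $(iv')$ is vacuous because no axiom has an empty succedent.

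The step I expect to be the main obstacle is the polarity bookkeeping that guarantees $p^{\circ}$-freeness is compatible with the remaining conditions, rather than any individual derivation. The delicate point is that the hypotheses $p\notin V^{\circ}(\bar{C}\Rightarrow\bar{D})$ and $p\notin V^{\circ}(\bar{C})$ must be shown to exclude exactly the configurations that would otherwise force $p$ into an interpolant with the wrong polarity. Concretely, in $(ii')$ with $\bar{D}=\{r\}$, $r\in\Sigma$ and $\circ=+$, the set $V^{+}(\bar{C}\Rightarrow\{r\})$ contains $r$ in its positive part, so the hypothesis yields $r\ne p$, which is precisely what makes $r$ a legitimate conjunct of $\Epax\Sigma$; dually, in $(iv')$ with $S^s=\{p\}$ and $\circ=+$, the same hypothesis forbids $p\in\bar{C}$, so the only way to close $S$ is via $\bot$, which is why the choice $\Apax S=\bot$ is forced there. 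Keeping straight which occurrences count positively and which negatively---recalling that for a sequent $V^{\circ}(S)=V^{\diamond}(S^a)\cup V^{\circ}(S^s)$, so antecedent and succedent occurrences of an atom are counted with opposite polarities---is where the verification must be carried out with care, and this is the part that is genuinely more than routine.
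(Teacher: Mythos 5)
Your proposal is correct and follows essentially the same route as the paper: the paper defines $\Epax \Sigma$ as the conjunction of all $p^{\circ}$-free formulas of $\Sigma$, and $\Apax S$ as $\top$ when $S$ is provable and otherwise as the disjunction of the $p^{\circ}$-free formulas in $S^s$, and then verifies $(var)$, \eqref{exists1}, \eqref{forall1}, $(ii')$ and $(iv')$ by exactly the case analysis on $Ax$ and $L\bot$ that you carry out, including the polarity bookkeeping you flag as the delicate point. Your variant (keeping only atoms and $\bot$ as conjuncts, collapsing non-atomic or non-admissible succedents to $\bot$, and using ``$S$ is an axiom'' instead of ``$S$ is provable'' for the $\top$ case) is a harmless specialization that still satisfies all five conditions; if anything it is slightly more self-contained, since the paper's broader definition tacitly relies on the derivability of identity sequents $\Gamma, \phi \Rightarrow \phi$ in $\mathbf{G4iM}$, which your construction never needs.
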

\begin{proof}
Define $\Epax \Sigma$ as the conjunction of all $p^{\circ}$-free formulas in $\Sigma$ and $\Apax S$ as the following: if $S$ is provable, define it as $\top$, otherwise, define $\Apax S$ as the disjunction of all $p^{\circ}$-free formulas in $S^s$.  We will show that $\Epax \Sigma$ and $\Apax S$ satisfy the conditions $(var)$, \eqref{exists1} and \eqref{forall1} of Theorem \ref{ThmULIPG4iM}, and condition $(ii')$ and $(iv')$. Clearly, $\Epax \Sigma$ and $\Apax S$ are $p^{\circ}$-free, $V^{\dagger}(\Epax \Sigma) \subseteq V^{\dagger}(\Sigma)$ and $V^{\dagger}(\Apax S) \subseteq V^{\dagger}(S)$, for any $\dagger \in \{+, -\}$ and  $\Sigma \Rightarrow \Epax \Sigma$ and $S \cdot (\Apax S \Rightarrow \,) $ are derivable.

For $(ii')$, if $\Sigma, \bar{C} \Rightarrow \bar{D}$ is an axiom, it is either of the form $\Gamma, q \Rightarrow q$, where $q$ is an atom, or $\Gamma, \bot \Rightarrow \Delta$. In the first case, as $\bar{D}=\{q\}$, the atom $q$ is $p^{\circ}$-free. If $q \in \bar{C}$, then $\bar{C}, \Epax \Sigma \Rightarrow \bar{D}$ is an instance of $(Ax)$ and hence provable. If $q \in \Sigma$, then $q$ appears as a conjunct in  $\Epax \Sigma$ and $\bar{C}, \Epax \Sigma \Rightarrow \bar{D}$ is provable. If $\Sigma, \bar{C} \Rightarrow \bar{D}$ is of the form $\Gamma, \bot \Rightarrow \Delta$, then if $\bot \in \bar{C}$, then $\bar{C}, \Epax \Sigma \Rightarrow \bar{D}$ is an instance of $(L \bot)$ and provable. If $\bot \in \Sigma$, then $\bot$ appears as a conjunct in  $\Epax \Sigma$ and hence $\bar{C}, \Epax \Sigma \Rightarrow \bar{D}$ is provable.

For $(iv')$, suppose a $p^{\circ}$-free multiset $\bar{C}$ is given such that $S \cdot (\bar{C} \Rightarrow)$ is an axiom in $\mathbf{G4iM}$. If $S$ is provable, then as $\Apax S=\top$, we have $\bar{C} \Rightarrow \Apax S$. If $S$ is not provable, then there are two cases to consider. First, suppose $S \cdot (\bar{C} \Rightarrow)$ is of the form $\Gamma, q \Rightarrow q$, where $q$ is an atom. If $q \notin \bar{C}$, then $q \in S^a$ which implies that $S$ is provable. Therefore, $q \in \bar{C}$. Hence $q$ is $p^{\circ}$-free and appears as a disjunct in $\Apax S$. 
Therefore, as $q \in \bar{C}$, we get $\bar{C} \Rightarrow \Apax S$. 
Second, suppose $S \cdot (\bar{C} \Rightarrow)$ is of the form $\Gamma, \bot \Rightarrow \Delta$. If $\bot \in S^a$, then $S$ is provable. Therefore, $\bot \notin S^{a}$. Hence, $\bot \in \bar{C}$ which implies $\bar{C} \Rightarrow \Apax S$.
\end{proof}

\begin{proof}[of Theorem \ref{ThmULIPG4iM}]
Let us fix some notations. Let $S_{\psi}$ denote $(\Rightarrow \psi)$. We use $\Sigma_{q, \psi}$(respectively $\Sigma_{\Box \psi, \theta}$) to denote the multiset obtained from $\Sigma$ by replacing one instance of $q \to \psi$ (respectively $\Box \psi \to \theta$) in $\Sigma$ by $\psi$ (respectively $\theta$). Accordingly, for $S=(\Sigma \Rightarrow \Delta)$, define $S_{q, \psi}=(\Sigma_{q, \psi} \Rightarrow \Delta)$ and $S_{\Box \psi, \theta}=(\Sigma_{\Box \psi, \theta} \Rightarrow \Delta)$. 
Define $I^{\circ}_{at}(\Sigma)=\{(q, \psi) \mid q \to \psi \in \Sigma \;\text{and}\; q \;\text{is an atom and}\; p^{\circ}\text{-free}\}$ and $I_{m}(\Sigma)=\{(\phi, \psi, \theta) \mid \Box \phi, \Box \psi \to \theta \in \Sigma \;\text{and}\; \mathbf{G4iM} \vdash \phi \Rightarrow \psi\}$. Here are 
some remarks: 
\begin{description}
    \item[$(R_1)$] If $q \to \psi \in \Sigma$ (resp. $\Box \psi \to \theta \in \Sigma$), then $\Sigma_{q, \psi} \prec \Sigma$ (resp. $\Sigma_{\Box \psi, \theta} \prec \Sigma)$ and $V^{\dagger}(\Sigma_{q, \psi}) \subseteq V^{\dagger}(\Sigma)$, (resp. $V^{\dagger}(\Sigma_{\Box \psi, \theta}) \subseteq V^{\dagger}(\Sigma)$), for any $\dagger \in \{+, -\}$. Similarly, if $q \to \psi \in S^{a}$ (resp. $\Box \psi \to \theta \in S^{a}$), then $S_{q, \psi} \prec S$ (resp. $S_{\Box \psi, \theta} \prec S)$ and $V^{\dagger}(S_{q, \psi}) \subseteq V^{\dagger}(S)$, (resp. $V^{\dagger}(S_{\Box \psi, \theta}) \subseteq V^{\dagger}(S)$).
    \item[$(R_2)$] If $\Box \psi \to \theta \in \Sigma$, then $S_{\psi} \prec \Sigma$ and $V^{\dagger}(S_{\psi}) \subseteq V^{\dagger}(\Sigma)$, for any $\dagger \in \{+, -\}$. Similarly, if $\Box \psi \to \theta \in S^a$, then $S_{\psi} \prec S$ and $V^{\dagger}(S_{\psi}) \subseteq V^{\dagger}(S)$.
    \item[$(R_3)$] If $(q, \psi) \in I_{at}^{\circ}(\Sigma)$, then $q$ is $p^{\circ}$-free.
\end{description}

\noindent Define the four formulas $\exists^+ p \Sigma$, $\exists^- p \Sigma$, $\forall^+ p S$ and $\forall^- p S$ simultaneously by recursion on the well-founded order $\prec$ over the set of all multisets and sequents: if $\Sigma = \varnothing$, define $\Ep \Sigma$ as $\top$. Otherwise,
define it as: 
\begin{center}
    $\bigwedge \limits_{R \in
L \mathcal{R}} (\bigwedge \limits_i (\Ep S_i^a \to \Apd S_{i}) \to \bigvee \limits_j \Ep \Sigma_{j}) 
 \wedge (\Epax \Sigma) \wedge (\Epat \Sigma) \wedge (\Epm \Sigma)$
\end{center}
The first conjunction is over all the left rules $R$ in $\mathbf{G4w}^-$ and the rule $(Lp\!\to)$, backward applicable to $(\Sigma \Rightarrow)$, where $(\Sigma \Rightarrow)$ is the conclusion, the $S_{i}$'s are the non-contextual premises and $(\Sigma_j \Rightarrow)$'s are the contextual premises. The second conjunct is provided by Lemma \ref{LemAxiom}. The rest are defined as
\begin{center}
   $\Epat \Sigma= \bigwedge \limits_{(q, \psi) \in I^{\diamond}_{at}(\Sigma)} q \to \Ep \Sigma_{q, \psi}\quad$, 
\end{center}
\[\Epm \Sigma = \bigwedge_{\Box \phi \in \Sigma} \Box \Ep \phi \wedge \bigwedge_{\Box \psi \to \theta \in \Sigma} (\Box \Apd S_{\psi} \to \Ep \Sigma_{\Box \psi, \theta}) \wedge \bigwedge_{(\phi, \psi, \theta) \in I_{m}(\Sigma)} \Ep \Sigma_{\Box \psi, \theta}
\]
For $\Ap S$, if $S$ is provable define it as $\top$, otherwise, define $\Ap S$ as: 
\begin{center}
   $\bigvee \limits_{R} (\bigwedge \limits_i (\Epd S_i^a \to \Ap S_{i})) \vee (\Apax S) \vee  (\Apat S)
 \vee (\Apm S)$ 
\end{center}
The first disjunction is over all rules $R$ in $\mathbf{G4w}$ backward applicable to $S$, where 
$S_{i}$'s are the premises. The second disjunct is provided by Lemma \ref{LemAxiom}. The third is defined as
\begin{center}
    $\Apat S= \bigwedge\limits_{(q, \psi) \in I_{at}^{\circ}(S^a)} q \wedge (\Epd S^{a}_{q, \psi} \to \Ap S_{q, \psi})$,
\end{center}
For $\Apm S$, if $S=(\Rightarrow \Box \psi)$, define $\Apm S= \Box \Ap S_{\psi}$. Otherwise, define
\begin{center}
    $\Apm S = \bigvee\limits_{\Box \psi \to \theta \in S^{a}} (\Ap S_{\Box \psi, \theta} \wedge \Box \Ap S_{\psi}) \vee \bigvee\limits_{(\phi, \psi, \theta) \in I_m(S^{a})} \Ap S_{\Box \psi, \theta}$
\end{center}
We use induction on the well-founded order $\prec$ to prove that $\Ep \Sigma$ and 
$\Ap S$ have all the properties of Theorem \ref{ThmULIPG4iM}. 
In fact, in the induction step, we assume that for a sequent $S$ (resp. multiset $\Sigma$), all $\exists^+ p \Sigma'$, $\exists^- p \Sigma'$, $\forall^+ p S'$ and $\forall^- p S'$ exist for any sequent $S'$ and multiset $\Sigma'$ lower than $S$ (resp. $\Sigma$).\\ 
To prove that the recursive definition is well-defined, note that in any rule in $\mathbf{G4w}$, if $S_i$'s are the premises and $S$ is the conclusion, we have $S_i \prec S$. Therefore, using the remarks $(R_1)$ and $(R_2)$ above, we conclude that both $\Ap S$ and $\Ep S$ are well-defined. 

To prove $(var)$, using the induction hypothesis and the remark $(R_3)$, it is clear that $\Ap S$ and $\Ep \Sigma$ are $p^{\circ}$-free. Moreover, as every rule in $\mathbf{G4iM}$ enjoys the variable preserving property, it is enough to use the induction hypothesis, Lemma \ref{LemAxiom} and the remark $(R_1)$ and $(R_2)$ to prove $V^{\dagger}(\Ap S) \subseteq V^{\dagger}(S)$ and $V^{\dagger}(\Ep \Sigma) \subseteq V^{\dagger}(\Sigma)$, for any $\dagger \in \{+, -\}$.

To prove conditions \eqref{exists1}, \eqref{exists2}, \eqref{forall1} and \eqref{forall2}, as the cases that $\Sigma=\varnothing$ and $S$ is provable are easy, from now on, we assume that $\Sigma \neq \varnothing$ and $S$ is not provable. 

To prove \eqref{exists1}, it is enough to show that the following formulas
\[
\bigwedge \limits_{R \in
L \mathcal{R}} (\bigwedge \limits_i (\Ep S_i^a \to \Apd S_{i}) \to \bigvee \limits_j \Ep \Sigma_{j}) \; (1)\; , \;
\Epax \Sigma \; (2)\; ,  \;
\Epat \Sigma  \; (3)  \; , \; \Epm \Sigma  \; (4)
\]
are all derivable from $\Sigma$. For (1), assume that the left rule $R$ of $\mathbf{G4w^-}$ is backward applicable to $(\Sigma \Rightarrow)$ with $S_i$'s as the non-contextual premises and $(\Sigma_j \Rightarrow)$'s as the contextual premises.
Since $S_i$'s and $\Sigma_j$'s are lower than $\Sigma$, by the induction hypothesis, we have $S_i \cdot (\Apd S_{i} \Rightarrow)$, $(S_i^a \Rightarrow \Ep S^a_i)$ and $(\Sigma_j \Rightarrow \Ep \Sigma_j)$ for all $i$ and $j$. Therefore, $S_i \cdot (\bigwedge_i (\Ep S^a_{i} \to \Apd S_{i}) \Rightarrow )$ and $\Sigma_j \Rightarrow \bigvee_j \Ep \Sigma_j$ are derivable. By the free-context property of $R$, we can add $\bigwedge_i (\Ep S^a_{i} \to \Apd S_{i})$ to the antecedents of the premises and conclusion and put $\bigvee_j \Ep \Sigma_j$ in the succedents of the contextual premises and the conclusion. The rule will become:
\begin{center}
 \begin{tabular}{c c}
 \AxiomC{$\{ \bigwedge_i (\Ep S^a_{i} \to \Apd S_{i}), S_i^{a} \Rightarrow S_i^s \}_{i \in I}$}
 \AxiomC{$\{ \Sigma_j \Rightarrow \bigvee_j \Ep \Sigma_j \}_{j \in J}$}
 \BinaryInfC{$\Sigma, \bigwedge_i (\Ep S^a_{i} \to \Apd S_{i}) \Rightarrow \bigvee_j \Ep \Sigma_j$}
 \DisplayProof
\end{tabular}
\end{center}
Hence, we get $\Sigma \Rightarrow (\bigwedge_i (\Ep S^a_{i} \to \Apd S_{i}) \rightarrow \bigvee_j \Ep \Sigma_j)$. The case where the last rule is $(Lp\!\to)$ is similar.

For (2), we use Lemma \ref{LemAxiom}. 
For (3), if $q$ is $p^{\diamond}$-free and $q \to \psi \in \Sigma$, then we have $\Sigma=\Sigma' \cup \{q \to \psi \}$ and $\Sigma_{q, \psi}=\Sigma' \cup \{\psi\}$. As $\Sigma_{q, \psi}$ is lower than $\Sigma$, by the induction hypothesis $\Sigma_{q, \psi} \Rightarrow \Ep \Sigma_{q, \psi}$. Therefore, $\Sigma', \psi \Rightarrow \Ep \Sigma_{q, \psi}$ which implies $\Sigma', q, q \to \psi \Rightarrow \Ep \Sigma_{q, \psi}$. Hence, we get $\Sigma \Rightarrow q \to \Ep \Sigma_{q, \psi}$. 

For (4), we have to show that each conjunct in $\Epm \Sigma$ is derivable from $\Sigma$. For the first conjunct, suppose $\Box \phi \in \Sigma$, i.e., $\Sigma=\Pi, \Box \phi$. As $\phi$ is lower than $\Sigma$, by the induction hypothesis we have $\phi \Rightarrow \Ep \phi$. Therefore, by the rule $(M)$, we have $\Box \phi \Rightarrow \Box \Ep \phi$. By $(L w)$, we get $\Pi, \Box \phi \Rightarrow \Box \Ep \phi$, which is $\Sigma \Rightarrow \Box \Ep \phi$. 
For the second conjunct, suppose $\Box \psi \to \theta \in \Sigma$, i.e., $\Sigma=\Pi, \Box \psi \to \theta$. As $\Sigma_{\Box \psi, \theta}$ and $S_{\psi}$ are lower than $\Sigma$, by the induction hypothesis, we have $\Sigma_{\Box \psi, \theta} \Rightarrow \Ep \Sigma_{\Box \psi, \theta}$ and $S_{\psi} \cdot (\Apd S_{\psi} \Rightarrow)$, which are $\Pi, \theta \Rightarrow \Ep \Sigma_{\Box \psi, \theta}$ and $\Apd S_{\psi} \Rightarrow \psi$. By $(L w)$, we have $\Pi, \Box \Apd S_{\psi}, \theta \Rightarrow \Ep \Sigma_{\Box \psi, \theta}$. Applying the rule $(LM\!\!\to)$, we get $\Pi, \Box \Apd S_{\psi} , \Box \psi \to \theta \Rightarrow \Ep \Sigma_{\Box \psi, \theta}$, which implies $\Sigma \Rightarrow \Box \Apd S_{\psi} \to \Ep \Sigma_{\Box \psi, \theta}$.
For the last conjunct, as $(\phi, \psi, \theta) \in I_m(\Sigma)$, we know that $\Box \phi, \Box \psi \to \theta \in \Sigma$ and the sequent $\phi \Rightarrow \psi$ is provable. Therefore, $\Sigma= \Pi, \Box \phi, \Box \psi \to \theta$. As $\Sigma_{\Box \psi, \theta}$ is lower than $\Sigma$, by the induction hypothesis, we have $\Sigma_{\Box \psi, \theta} \Rightarrow \Ep \Sigma_{\Box \psi, \theta}$, which is $\Pi, \Box \phi , \theta \Rightarrow \Ep \Sigma_{\Box \psi, \theta}$. Since $\phi \Rightarrow \psi$ is also provable, we can apply the rule $(LM \!\!\to)$ to obtain $\Pi, \Box \phi, \Box \psi \to \theta \Rightarrow \Ep \Sigma_{\Box \psi, \theta}$, which is $\Sigma \Rightarrow \Ep \Sigma_{\Box \psi, \theta}$.\\

For \eqref{exists2}, we assume $p \notin V^{\diamond}(\bar{C}), p \notin V^{\circ}(\bar{D})$ and $\Sigma, \bar{C} \Rightarrow \bar{D}$ is derivable and we use induction on the length of its proof. If $\Sigma, \bar{C} \Rightarrow \bar{D}$ is an axiom, we have $\Epax \Sigma, \bar{C} \Rightarrow \bar{D}$, by Lemma \ref{LemAxiom}, and hence $\Ep \Sigma, \bar{C} \Rightarrow \bar{D}$. If the last rule is a left rule in $\mathbf{G4w}^-$, it is of the form:
\begin{center}
 \begin{tabular}{c c}
 \AxiomC{$\{ \Gamma, \bar{\phi}_i \Rightarrow \bar{\delta}_i \}_{i \in I}$}
 \AxiomC{$\{ \Gamma, \bar{\psi}_{j} \Rightarrow \bar{D} \}_{j \in J}$}
 \BinaryInfC{$\Gamma, \phi \Rightarrow \bar{D}$}
 \DisplayProof
\end{tabular}
\end{center}
Then, there are two cases to consider, i.e., either $\phi \in \bar{C}$ or $\phi \in \Sigma$. If $\phi \in \bar{C}$, set $\bar{C}'=\bar{C}-\{\phi\}$. Since $\phi \in \bar{C}$, it is $p^{\diamond}$-free by the assumption, and by the local variable preserving property $\bar{\phi}_i$'s and $\bar{\psi}_j$'s are $p^{\diamond}$-free and $\bar{\delta}_i$'s are $p^{\circ}$-free. By the induction hypothesis, as $p \notin V^{\circ}(\bar{C}', \bar{\phi}_i \Rightarrow \bar{\delta}_i)$, $p \notin V^{\circ}(\bar{C}', \bar{\psi}_j \Rightarrow \bar{D})$, and $(\Sigma, \bar{C}', \bar{\phi}_i \Rightarrow \bar{\delta}_i)$ and $(\Sigma, \bar{C}', \bar{\psi}_j \Rightarrow \bar{D})$ have shorter proofs, we have $(\Ep \Sigma, \bar{C}', \bar{\phi}_i \Rightarrow \bar{\delta}_i)$ and $(\Ep \Sigma, \bar{C}', \bar{\psi}_j \Rightarrow \bar{D})$ are derivable, for each $i \in I$ and $j \in J$. By using the rule itself, we get $\Ep \Sigma, \bar{C}', \phi \Rightarrow \bar{D}$, which implies $\Ep \Sigma, \bar{C} \Rightarrow \bar{D}$.

If $\phi \in \Sigma$, set $\Sigma'=\Sigma-\{\phi\}$. Hence, the last rule is of the form:
\begin{center}
 \begin{tabular}{c c}
 \AxiomC{$\{ \Sigma', \bar{C}, \bar{\phi}_i \Rightarrow \bar{\delta}_i \}_{i \in I}$}
 \AxiomC{$\{ \Sigma', \bar{C}, \bar{\psi}_j \Rightarrow \bar{D} \}_{j \in J}$}
 \BinaryInfC{$\Sigma', \bar{C}, \phi \Rightarrow \bar{D}$}
 \DisplayProof
\end{tabular}
\end{center}
Note that neither $\bar{C}$ nor $\bar{D}$ contain any active formulas. By the free-context property, if we delete $\bar{C}$ and $\bar{D}$ from the premises and conclusion of the last rule, the rule remains valid and it changes to:
\begin{center}
 \begin{tabular}{c c}
 \AxiomC{$\{ \Sigma', \bar{\phi}_i \Rightarrow \bar{\delta}_i \}_{i \in I}$}
 \AxiomC{$\{ \Sigma', \bar{\psi}_j \Rightarrow  \}_{j \in J}$}
 \BinaryInfC{$\Sigma', \phi \Rightarrow $}
 \DisplayProof
\end{tabular}
\end{center}
Hence, the rule is backward applicable to 
$(\Sigma \Rightarrow)$. Set $S_i=(\Sigma', \bar{\phi}_i \Rightarrow \bar{\delta}_i)$ and $\Sigma_j=\Sigma', \bar{\psi}_j$. As $S_i \cdot (\bar{C} \Rightarrow)$ and $(\Sigma_j, \bar{C} \Rightarrow \bar{D})$ are provable and $S_i$'s and $\Sigma_j$'s are lower than $(\Sigma \Rightarrow)$, by the induction hypothesis, we have $(\Ep S^a_i, \bar{C} \Rightarrow \Apd S_i)$ and $(\Ep \Sigma_j, \bar{C} \Rightarrow \bar{D})$. Hence $\bar{C} \Rightarrow \bigwedge_i (\Ep S^a_i \to \Apd S_i)$ and $(\bigvee_j \Ep \Sigma_j, \bar{C} \Rightarrow \bar{D})$ are derivable. Therefore, we have $(\bigwedge_i (\Ep S^a_i \to \Apd S_i) \to \bigvee_j \Ep \Sigma_j, \bar{C} \Rightarrow \bar{D})$. As $\bigwedge_i (\Ep S^a_i \to \Apd S_i) \to \bigvee_j \Ep \Sigma_j$ is a conjunct in $\Ep \Sigma$, we have $\Ep \Sigma, \bar{C} \Rightarrow \bar{D}$.\\

If the last rule of the proof is a right rule, then it is of the form:
\begin{center}
 \begin{tabular}{c c}
 \AxiomC{$\{ \Sigma, \bar{C}, \bar{\phi}_i \Rightarrow \bar{\psi}_i \}_{i \in I}$}
 \UnaryInfC{$\Sigma, \bar{C} \Rightarrow \phi$}
 \DisplayProof
\end{tabular}
\end{center}
and $\bar{D}=\{\phi\}$. Hence, $\phi$ is $p^{\circ}$-free. By the local variable preserving property, $\bar{\phi}_i$'s are $p^{\diamond}$-free and $\bar{\psi}_i$'s are $p^{\circ}$-free. By the induction hypothesis, as $(\bar{C}, \bar{\phi}_i \Rightarrow \bar{\psi}_i)$ is $p^{\circ}$-free and $(\Sigma, \bar{C}, \bar{\phi}_i \Rightarrow \bar{\psi}_i)$ has a shorter proof, we have $(\Ep \Sigma, \bar{C}, \bar{\phi}_i \Rightarrow \bar{\psi}_i)$. Using the rule itself, we get $\Ep \Sigma, \bar{C} \Rightarrow \phi$
which is $\Ep \Sigma, \bar{C} \Rightarrow \bar{D}$.\\

If the last rule is the rule $(Lp\! \to)$, then it is of the form 
\begin{center}
 \begin{tabular}{c c}
 \AxiomC{$\Gamma, q, \psi \Rightarrow \bar{D}$}
 \UnaryInfC{$\Gamma, q, q \to \psi \Rightarrow \bar{D}$}
 \DisplayProof
\end{tabular}
\end{center}
There are four cases to consider, depending on whether $q$ or $q \to \psi$ are in $\bar{C}$. 

If $q, q \to \psi \in \bar{C}$, then set $\bar{C'}= \bar{C}-\{q, q\to \psi\}$. As $\Sigma, \bar{C'}, q, \psi \Rightarrow \bar{D}$ has a shorter proof and $q$ and $\psi$ are $p^{\diamond}$-free, then by the induction hypothesis, $\Ep \Sigma, \bar{C}', q, \psi \Rightarrow \bar{D}$. Hence, by the rule itself, we have $\Ep \Sigma, \bar{C}', q, q \to \psi \Rightarrow \bar{D}$.

If $q, q \to \psi \notin \bar{C}$, then 
the premise of the rule is of the form $\Sigma_{q, \psi}, \bar{C} \Rightarrow \bar{D}$, and by the induction hypothesis, we have $\Ep \Sigma_{q, \psi}, \bar{C} \Rightarrow \bar{D}$. However, by the free-context property, we can delete $\bar{C}$ and $\bar{D}$ in the premise and conclusion and the rule remains valid and has the form 
 \begin{tabular}{c c}
 \AxiomC{$\Sigma_{ q, \psi} \Rightarrow$}
 \UnaryInfC{$\Sigma \Rightarrow $}
 \DisplayProof.
\end{tabular}
Therefore, this rule is backward applicable to $(\Sigma \Rightarrow)$ and 
as this rule has no non-contextual premise, $\top \to \Ep \Sigma_{q, \psi}$ appears as a conjunct in $\Ep \Sigma$. Hence, $\Ep \Sigma, \bar{C} \Rightarrow \bar{D}$ is derivable.

If $q \to \psi \notin \bar{C}$ and $q \in \bar{C}$, then $q$ is $p^{\diamond}$-free. Set $\bar{C'}= \bar{C}-\{q\}$. Then, the premise of the rule is of the form $\Sigma_{q, \psi}, q, \bar{C}' \Rightarrow \bar{D}$. As $q \to \psi \in \Sigma$, we have $\Sigma_{q, \psi} \prec \Sigma$, by $(R_1)$. As $(\bar{C}', q \Rightarrow \bar{D} )$ is $p^{\circ}$-free, by the induction hypothesis, $\bar{C}', q, \Ep \Sigma_{q, \psi} \Rightarrow \bar{D} $. Using the rule $(Lp\!\to)$, we get $\bar{C}', q, q \to \Ep \Sigma_{q, \psi} \Rightarrow \bar{D}$. As $q \to \Ep \Sigma_{q, \psi}$ is a conjunct in $\Ep \Sigma$, we have $\Ep \Sigma, \bar{C} \Rightarrow \bar{D}$.

If $q \notin \bar{C}$ and $q \to \psi \in \bar{C}$, then $\psi$ is $p^{\diamond}$-free and $q$ is $p^{\circ}$-free. Set $\bar{C'}=\bar{C}-\{q \to \psi\}$. Then, the premise of the rule is of the form $\Sigma, \psi, \bar{C}' \Rightarrow \bar{D}$.
By the induction hypothesis, we have $\Ep \Sigma, \psi, \bar{C}' \Rightarrow \bar{D} $. Moreover, by $(Ax)$, we have $\Sigma \Rightarrow q$. Since $q$ is $p^{\circ}$-free, we have $\Epax \Sigma \Rightarrow q$ and hence, $\Ep \Sigma \Rightarrow q$. Therefore, $\Ep \Sigma, q \to \psi, \bar{C}' \Rightarrow \bar{D}$, which is $\Ep \Sigma, \bar{C} \Rightarrow \bar{D}$.\\


If the last rule in the proof is the modal rule $(M)$, then it is of the form
\begin{center}
\begin{tabular}{c}
 \AxiomC{$\phi \Rightarrow \psi$} 
   \RightLabel{\small$M$} 
 \UnaryInfC{$\Box \phi \Rightarrow \Box \psi$}
 \DisplayProof
\end{tabular}
\end{center}
As $\Sigma \neq \varnothing$, we have $\bar{C}= \varnothing$ and $\Sigma=\Box \phi$. 
Since $\bar{D}= \Box \psi$, the formula $\psi$ is $p^{\circ}$-free. As $(\phi \Rightarrow \psi)$ is provable and $\phi$ is lower than $\Sigma$, by the induction hypothesis, we have $\Ep \phi \Rightarrow \psi$ and by $(M)$, $\Box \Ep \phi \Rightarrow \Box \psi$. As $\Box \Ep \phi$ appears as a conjunct in the definition of $\Ep \Sigma$, we get $\Ep \Sigma, \bar{C} \Rightarrow \bar{D}$.

If the last rule in the proof is the modal rule $(LM\!\!\to)$, then it is of the form

\begin{center}
\begin{tabular}{c}
 \AxiomC{$\phi \Rightarrow \psi$} 
 \AxiomC{$\Gamma, \Box \phi, \theta \Rightarrow \bar{D}$} 
   \RightLabel{\small$LM \to$} 
 \BinaryInfC{$\Gamma, \Box \phi, \Box \psi \to \theta \Rightarrow \bar{D}$}
 \DisplayProof
\end{tabular}
\end{center}
There are four cases to consider based on which formulas are in $\bar{C}$. 

If $\Box \phi, \Box \psi \to \theta \in \bar{C}$, then $\phi$ and $\theta$ are $p^{\diamond}$-free and $\psi$ is $p^{\circ}$-free. Set $\bar{C'}= \bar{C}-\{\Box \phi, \Box \psi \to \theta\}$. The right premise of the rule is of the form $\Sigma, \bar{C'}, \Box \phi, \theta \Rightarrow \bar{D}$. Hence, by the induction hypothesis $\Ep \Sigma, \bar{C'}, \Box \phi, \theta \Rightarrow \bar{D}$. By $(LM\!\! \to)$ on the latter sequent and $\phi \Rightarrow \psi$, we get $\Ep \Sigma, \bar{C'}, \Box \phi, \Box \psi \to \theta \Rightarrow \bar{D}$ which is $\Ep \Sigma, \bar{C} \Rightarrow \bar{D}$.

If $\Box \phi, \Box \psi \to \theta \notin \bar{C}$, then set $\Sigma'=\Sigma-\{\Box \phi, \Box \psi \to \theta\}$. Hence,
the right premise of the rule is of the form $\Sigma', \Box \phi, \theta, \bar{C} \Rightarrow \bar{D}$, or equivalently $\Sigma_{\Box \psi, \theta}, \bar{C} \Rightarrow \bar{D}$. Therefore, by the induction hypothesis $\Ep \Sigma_{\Box \psi, \theta}, \bar{C} \Rightarrow \bar{D}$. However, $\Ep \Sigma_{\Box \psi, \theta}$ appears as a conjunct in the definition of $\Ep \Sigma$, since $\Box \phi, \Box \psi \to \theta \in \Sigma$ and $\mathbf{G4iM} \vdash \phi \Rightarrow \psi$. Consequently, $\Ep \Sigma , \bar{C} \Rightarrow \bar{D}$.

If $\Box \psi \to \theta \in \bar{C}$ and $\Box \phi \notin \bar{C}$, then $\theta$ is $p^{\diamond}$-free and $\psi$ is $p^{\circ}$-free. Set $\bar{C}'=\bar{C}-\{\Box \psi \to \theta\}$. Then, the premise has the form $\Sigma, \bar{C}', \theta \Rightarrow \bar{D}$. By the induction hypothesis, we have $\Ep \Sigma, \bar{C}', \theta \Rightarrow \bar{D}$ and by $(Lw)$, we have $\Ep \Sigma, \bar{C}', \Box \Ep \phi, \theta \Rightarrow \bar{D}$.
Moreover, as $\phi \Rightarrow \psi$ is provable, $\psi$ is $p^{\circ}$-free, and $\phi$ is lower than $\Sigma$, by the induction hypothesis, we have $\Ep \phi \Rightarrow \psi$. Applying $(LM \!\!\to)$ on $\Ep \phi \Rightarrow \psi$ and  $\Ep \Sigma, \bar{C}', \Box \Ep \phi, \theta \Rightarrow \bar{D}$, we get $\Ep \Sigma, \bar{C}', \Box \Ep \phi, \Box \psi \to \theta \Rightarrow \bar{D}$. Note that as $\Box \phi \in \Sigma$, by definition $\Box \Ep \phi$ appears as a conjunct in $\Ep \Sigma$. Therefore, $\Ep \Sigma, \bar{C} \Rightarrow \bar{D}$.

Finally, if $\Box \phi \in \bar{C}$ and $\Box \psi \to \theta \notin \bar{C}$, then 
$\phi$ is $p^{\diamond}$-free. Set $\bar{C}'=\bar{C}-\{\Box \phi\}$. Therefore, the right premise is of the form $\bar{C}', \Box \phi, \Sigma_{\Box \psi, \theta} \Rightarrow \bar{D}$. Since $\bar{C}'$ and $\phi$ are $p^{\diamond}$-free and $\bar{D}$ is $p^{\circ}$-free, by the induction hypothesis, we have $\bar{C}', \Box \phi, \Ep \Sigma_{\Box \psi, \theta} \Rightarrow \bar{D}$. Moreover, for the premise $\phi \Rightarrow \psi$, as $S_{\psi}$ is lower than $\Sigma$ and $\phi$ is $p^{\diamond}$-free, by the induction hypothesis, we get $\phi, \Ep S_{\psi}^{a} \Rightarrow \Apd S_{\psi}$. However, as $S_{\psi}^{a}= \varnothing$, by definition we have $\Ep S_{\psi}^{a} = \top$. Hence, $\phi \Rightarrow \Apd S_{\psi}$. Applying $(LM \!\!\to)$ on $\phi \Rightarrow \Apd S_{\psi}$ and $\bar{C}', \Box \phi, \Ep \Sigma_{\Box \psi, \theta} \Rightarrow \bar{D}$ we get $\bar{C}', \Box \phi, \Box \Apd S_{\psi} \to \Ep \Sigma_{\Box \psi, \theta} \Rightarrow \bar{D}$. Therefore, as $\Box \psi \to \theta \in \Sigma$, the formula $\Box \Apd S_{\psi} \to \Ep \Sigma_{\Box \psi, \theta}$ is a conjunct in $\Ep \Sigma$, and hence $\Ep \Sigma , \bar{C} \Rightarrow \bar{D}$ is derivable.\\

To prove \eqref{forall1}, it is enough to show that the following are provable:
\begin{center}
    \begin{tabular}{cc}
        $S \cdot (\bigwedge \limits_i (\Epd S_i^a \to \Ap S_{i}) \Rightarrow) \quad (1),$ &  $ S \cdot (\Apax S \Rightarrow)  \quad (2),$\\
        $S \cdot (\Apat S \Rightarrow) \quad (3),$ & $S \cdot (\Apm S \Rightarrow) \quad (4).$
    \end{tabular}
\end{center}
For (1), assume that the rule $R$ in $\GthW$ is backward applicable to $S$ and the premises of $R$ are $S_i$'s.
As $S_i$'s are lower than $S$, by the induction hypothesis $S_i \cdot (\Ap S_{i} \Rightarrow)$ and $S_i^a \Rightarrow \Epd S_i^a$. Therefore, $S_i \cdot (\Epd S_i^a \to \Ap S_{i} \Rightarrow)$. Hence, by weakening, we have $S_i \cdot (\{\Epd S_i^a \to \Ap S_{i}\}_i \Rightarrow )$. Since any rule in $\mathbf{G4w}$ has the free-context property, we can add $\{\Epd S_i^a \to \Ap S_{i}\}_i$ to the antecedents of the premises and conclusion and by the rule itself, we have $S \cdot (\{\Epd S_i^a \to \Ap S_{i}\}_i \Rightarrow )$ and hence we get $S \cdot (\bigwedge_i (\Epd S_i^a \to \Ap S_{i}) \Rightarrow \,)$.

For (2), see Lemma \ref{LemAxiom}. For (3), if $(q, \psi) \in I^{\circ}_{at}(S^{a})$, then $S= (\Gamma, q \to \psi \Rightarrow \Delta)$ and $S_{q, \psi}= (\Gamma , \psi \Rightarrow \Delta)$. As $S_{q, \psi} \prec S$, by the induction hypothesis $\Gamma, \psi, \Ap S_{q, \psi} \Rightarrow \Delta$ and $\Gamma, \psi \Rightarrow \Epd S^{a}_{q, \psi}$. Hence, $\Gamma, \psi, \Epd S_{q, \psi}^a \to \Ap S_{q, \psi} \Rightarrow \Delta$. Therefore, $\Gamma, q, q \to \psi, \Epd S_{q, \psi}^a \to \Ap S_{q, \psi} \Rightarrow \Delta$ which implies $\Gamma, q \to \psi, q \wedge (\Epd S_{q, \psi}^a \to \Ap S_{q, \psi}) \Rightarrow \Delta$, and we get $S \cdot (q \wedge (\Epd S_{q, \psi}^a \to \Ap S_{q, \psi}) \Rightarrow \,)$.

For (4), if $S=(\Rightarrow \Box \psi)$, then by definition $\Apm S= \Box \Ap S_{\psi}$. As $S_{\psi} \prec S$, by the induction hypothesis $S_{\psi} \cdot (\Ap S_{\psi} \Rightarrow \,)= (\Ap S_{\psi} \Rightarrow \psi)$ is provable. By $(M)$ we get $\Box \Ap S_{\psi} \Rightarrow \Box \psi$ which is $S \cdot (\Apm S \Rightarrow \,)$. If $S$ is not of the form $(\Rightarrow \Box \psi)$, then $\Apm S$ is defined by a disjunction over two families of formulas. We have to show that adding any such disjunct to the antecedent of $S$ makes it provable.

For the first family of disjuncts, if $\Box \psi \to \theta \in S^a$, then $S$ is in form $(\Gamma, \Box \psi \to \theta \Rightarrow \Delta)$. As $S_{\psi}$ and $S_{\Box \psi, \theta}$ are lower than $S$, by the induction hypothesis we have $(\Ap S_{\psi} \Rightarrow \psi)$ and $(\Ap S_{\Box \psi, \theta}, \Gamma , \theta \Rightarrow \Delta)$. By $(Lw)$, we get $(\Box \Ap S_{\psi}, \Ap S_{\Box \psi, \theta}, \Gamma , \theta \Rightarrow \Delta)$ and by $(LM \to)$, $\Box \Ap S_{\psi}, \Ap S_{\Box \psi, \theta}, \Gamma, \Box \psi \to \theta \Rightarrow \Delta$, which implies $S \cdot (\Box \Ap S_{\psi} \wedge \Ap S_{\Box \psi, \theta} \Rightarrow \,)$.

For the second family of disjuncts, if $(\phi, \psi, \theta) \in I_m(S^a)$, then $S$ has the form $S= (\Gamma, \Box \phi, \Box \psi \to \theta \Rightarrow \Delta)$ and $\mathbf{G4iM \vdash \phi \Rightarrow \psi}$. Since $S_{\Box \psi, \theta}=(\Gamma, \Box \phi, \theta \Rightarrow \Delta)$ is lower than $S$, by the induction hypothesis we have $\Ap S_{\Box \psi, \theta}, \Gamma, \Box \phi, \theta \Rightarrow \Delta$. Applying $(LM \to)$ on the latter sequent and $\phi \Rightarrow \psi$, we get $S \cdot (\Ap S_{\Box \psi, \theta} \Rightarrow)$.\\

For \eqref{forall2}, we use induction on the length of the proof of $S \cdot (\bar{C} \Rightarrow)$. If $S \cdot (\bar{C} \Rightarrow)$ is an axiom, by Lemma \ref{LemAxiom} we have $\bar{C} \Rightarrow \Apax S$, and hence $\Epd S^a, \bar{C} \Rightarrow \Ap S$. If the last rule is a left one in $\mathbf{G4w}^-$, it is of the form:
\begin{center}
 \begin{tabular}{c c}
 \AxiomC{$\{ \Gamma, \bar{\phi}_{i} \Rightarrow \bar{\delta}_i \}_{i \in I}$}
 \AxiomC{$\{ \Gamma, \bar{\psi}_j \Rightarrow \Delta \}_{j \in J}$}
 \BinaryInfC{$\Gamma, \phi \Rightarrow \Delta$}
 \DisplayProof
\end{tabular}
\end{center}
There are two cases to consider, either $\phi \in \bar{C}$ or $\phi \in S^a$. If $\phi \in \bar{C}$, then it is $p^{\circ}$-free and by the local variable preserving property, $\bar{\phi}_i$'s and $\bar{\psi}_j$'s are $p^{\circ}$-free and $\bar{\delta}_i$'s are $p^{\diamond}$-free. Set $\bar{C}'=\bar{C}-\{\phi\}$. As the sequent $S \cdot (\bar{C}', \bar{\psi}_j \Rightarrow)$ has a shorter proof, by the induction hypothesis we have $(\Epd S^a, \bar{C}', \bar{\psi}_j \Rightarrow \Ap S)$. Now, we want to use the induction hypothesis to prove $(\Epd S^a, \bar{C}', \bar{\phi}_i \Rightarrow \bar{\delta}_i)$. Note that it might be the case that $S^s=\varnothing$ and hence $S^{a}$ is not lower than $S$. However, we already saw that for any multiset $\Sigma$, having the conditions \eqref{exists1}, \eqref{exists2}, \eqref{forall1} and \eqref{forall2} for all multisets and sequents below $\Sigma$ proves part \eqref{exists2} for $\Sigma$. Putting $\Sigma=S^a$, as any multiset or sequent below $S^a$ is also below $S$, by the induction hypothesis we have all four conditions for them and hence we have \eqref{exists2} for $S^a$. Now, as $(S^a, \bar{C}', \bar{\phi}_i \Rightarrow \bar{\delta}_i)$ is provable and $p \notin V^{\diamond}(\bar{C}', \bar{\phi}_i \Rightarrow \bar{\delta}_i)$, by \eqref{exists2}, we have $(\Epd S^a, \bar{C}', \bar{\phi}_i \Rightarrow \bar{\delta}_i)$.
By the rule itself, we have  
\begin{center}
 \begin{tabular}{c c}
 \AxiomC{$ \{\Epd S^a, \bar{C}', \bar{\phi}_i \Rightarrow \bar{\delta}_i\}_{i \in I}$}
 \AxiomC{$\{\Epd S^a, \bar{C}', \bar{\psi}_j \Rightarrow \Ap S \}_{j \in J}$}
 \BinaryInfC{$\Epd S^a, \bar{C}', \phi \Rightarrow \Ap S$}
 \DisplayProof
\end{tabular}
\end{center}
which is $\Epd S^a, \bar{C} \Rightarrow \Ap S$.\\
If $\phi \notin \bar{C}$, then it does not contain any active formulas of the rule. Set $\Gamma'=S^a-\{\phi\}$. The last rule is of the form:
\begin{center}
 \begin{tabular}{c c}
 \AxiomC{$\{ \Gamma',  \bar{C}, \bar{\phi}_i \Rightarrow \bar{\delta}_i \}_{i \in I}$}
 \AxiomC{$\{ \Gamma',  \bar{C}, \bar{\psi}_j \Rightarrow \Delta \}_{j \in J}$}
 \BinaryInfC{$\Gamma', \bar{C}, \phi \Rightarrow \Delta$}
 \DisplayProof
\end{tabular}
\end{center}
By the free-context property, we can delete $\bar{C}$ from the premises and conclusion of the rule which remains valid and changes to:
\begin{center}
 \begin{tabular}{c c}
 \AxiomC{$\{ \Gamma', \bar{\phi}_i \Rightarrow \bar{\delta}_i \}_{i \in I}$}
 \AxiomC{$\{ \Gamma', \bar{\psi}_j \Rightarrow \Delta \}_{j \in J}$}
 \BinaryInfC{$\Gamma', \phi \Rightarrow \Delta$}
 \DisplayProof
\end{tabular}
\end{center}
Therefore, the rule is backward applicable to $S=(\Gamma', \phi \Rightarrow \Delta)$. Set $S_i=(\Gamma', \bar{\phi}_i \Rightarrow \bar{\delta}_i)$ and $T_j=(\Gamma' , \bar{\psi}_j \Rightarrow \Delta)$. As $S_i$'s and $T_j$'s are lower than $S$ and $S_i \cdot (\bar{C} \Rightarrow)$ and $T_j \cdot (\bar{C} \Rightarrow)$ are provable, by the induction hypothesis, we have $\Epd S^a_i, \bar{C} \Rightarrow \Ap S_i$ and $\Epd T^a_j, \bar{C} \Rightarrow \Ap T_j$. Hence, $\bar{C} \Rightarrow \bigwedge_i (\Epd S^a_i \to \Ap S_i) \wedge \bigwedge_j (\Epd T^a_j \to \Ap T_j)$ and as $\bigwedge_i (\Epd S^a_i \to \Ap S_i) \wedge \bigwedge_j (\Epd T^a_j \to \Ap T_j)$ appears as a disjunct in $\Ap S$, we have $\bar{C} \Rightarrow \Ap S$ and hence $\Epd S^a, \bar{C} \Rightarrow \Ap S$.

\noindent The case where the last rule is a right one in $\mathbf{G4w^-}$ is similar.
 If the last rule is the rule $(Lp\!\to)$, then it has the form 
\begin{center}
 \begin{tabular}{c c}
 \AxiomC{$\Gamma, q, \psi \Rightarrow \Delta$}
 \UnaryInfC{$\Gamma, q, q \to \psi \Rightarrow \Delta$}
 \DisplayProof
\end{tabular}
\end{center}
There are four cases to consider, depending on whether $q$ or $q \to \psi$ are in $\bar{C}$. 

If $q, q \to \psi \in \bar{C}$, then $q$ and $\psi$ are $p^{\circ}$-free. Set $\bar{C'}= \bar{C}-\{q, q\to \psi\}$. As the premise $S \cdot (\bar{C'}, q, \psi \Rightarrow)$ has a shorter proof, by the induction hypothesis $\Epd S^a, \bar{C}', q, \psi \Rightarrow \Ap S$ and by $(Lp\!\to)$ we have $\Epd S^a, \bar{C}', q, q \to \psi \Rightarrow \Ap S$.

If $q, q \to \psi \notin \bar{C}$, then by the free-context property we can delete $\bar{C}$ from the premise and the conclusion and the rule remains valid and changes to
\begin{center}
 \begin{tabular}{c c}
 \AxiomC{$\Gamma-\bar{C}, q, \psi \Rightarrow \Delta$}
 \UnaryInfC{$\Gamma-\bar{C}, q, q \to \psi \Rightarrow \Delta$}
 \DisplayProof
\end{tabular}
\end{center}
Note that the conclusion is $S$. Therefore, the rule is backward applicable to $S$.  Denote the premise by $S'$. By the induction hypothesis we have $\Epd S'^a, \bar{C} \Rightarrow \Ap S'$. As $\Epd S'^a \to \Ap S'$ appears as a disjunct in $\Ap S$, we have $\Epd S^a, \bar{C} \Rightarrow \Ap S$.

If $q \to \psi \notin \bar{C}$ and $q \in \bar{C}$, 
then $q \to \psi \in S^a$, $q$ is $p^{\circ}$-free and the premise is of the form $S_{q, \psi}\cdot(\bar{C}\Rightarrow)$. By $(R_1)$, we know $S_{q, \psi} \prec S$. Hence, by the induction hypothesis $\Epd S_{q, \psi}^a, \bar{C} \Rightarrow \Ap S_{q, \psi}$. Hence, as $q \in \bar{C}$, we get $\bar{C} \Rightarrow q \wedge (\Epd S_{q, \psi}^a \to \Ap S_{q, \psi})$. As $q \to \psi \in S^a$ and $q$ is $p^{\circ}$-free, we have $(q, \psi) \in I^{\circ}_{at} (S^{a})$. Hence, $q \wedge (\Epd S_{q, \psi}^a \to \Ap S_{q, \psi})$ is a disjunct in $\Ap S$, and we have $\Epd S^a, \bar{C} \Rightarrow \Ap S$.

If $q \notin \bar{C}$ and $q \to \psi \in \bar{C}$, then $q \in S^a$, $q$ is $p^{\diamond}$-free and $\psi$ is $p^{\circ}$-free. Set $\bar{C'}=\bar{C}-\{q \to \psi\}$.
As the premise is of the form $S \cdot (\bar{C'}, \psi \Rightarrow \,)$ and it has a shorter proof, by the induction hypothesis we have $\Epd S^a, \bar{C}', \psi \Rightarrow \Ap S$. As $S^a \Rightarrow q$ is an instance of $(Ax)$ and $q$ is $p^{\diamond}$-free, we reach $\Epdax S^a \Rightarrow q$. Therefore, as $\Epdax S^a$ is a conjunct in $\Epd S^a$, we have $\Epd S^a, q \to \psi, \bar{C}' \Rightarrow  \Ap S$.

If the last rule in the proof is the modal rule $(M)$, then it is of the form
\begin{center}
\begin{tabular}{c}
 \AxiomC{$\phi \Rightarrow \psi$} 
   \RightLabel{\small$M$} 
 \UnaryInfC{$\Box \phi \Rightarrow \Box \psi$}
 \DisplayProof
\end{tabular}
\end{center}
If $\bar{C} =\varnothing$, then $S=(\Box \phi \Rightarrow \Box \psi)$ is provable which contradicts with the assumption that $S$ is not provable. Hence, 
$\bar{C}=\Box \phi$. Therefore, $\phi$ is $p^{\circ}$-free and $S=(\Rightarrow \Box \psi)$. By definition $\Apm S= \Box \Ap S_{\psi}$. Since $\phi$ is $p^{\circ}$-free and $\phi \Rightarrow \psi$ is provable, by the induction hypothesis $\Epd S_{\psi}^{a}, \phi \Rightarrow \Ap S_{\psi}$. However, since $S_{\psi}^{a}=\varnothing$, we have $\Epd S_{\psi}^{a}=\top$. Therefore, $\phi \Rightarrow \Ap S_{\psi}$, and by $(M)$ and then $(Lw)$, we get $\Epd S^{a}, \bar{C} \Rightarrow \Apm S$. As $\Apm S$ is one of the disjuncts in the definition of $\Ap S$, we get $\Epd S^{a}, \bar{C} \Rightarrow \Ap S$.

If the last rule in the proof is the modal rule $(LM\!\!\to)$, then it is of the form
\begin{center}
\begin{tabular}{c}
 \AxiomC{$\phi \Rightarrow \psi$} 
 \AxiomC{$\Gamma, \Box \phi, \theta \Rightarrow \Delta$} 
   \RightLabel{\small$LM \!\!\to$} 
 \BinaryInfC{$\Gamma, \Box \phi, \Box \psi \to \theta \Rightarrow \Delta$}
 \DisplayProof
\end{tabular}
\end{center}
There are four cases to consider based on which formulas 
are in $\bar{C}$.

If $\Box \phi, \Box \psi \to \theta \in \bar{C}$, then $\phi$ and $\theta$ are $p^{\circ}$-free. Set $\bar{C'}=\bar{C}-\{\Box \phi, \Box \psi \to \theta\}$. The right premise is of the form $S \cdot (\bar{C'}, \Box \phi, \theta \Rightarrow \,)$. Therefore, by the induction hypothesis $\Epd S^{a}, \bar{C'},\Box \phi, \theta \Rightarrow \Ap S$. Applying $(LM \!\!\to)$ on the latter sequent and $\phi \Rightarrow \psi$, we get $(\Epd S^{a}, \bar{C'}, \Box \phi, \Box \psi \to \theta \Rightarrow \Ap S)=(\Epd S^{a}, \bar{C} \Rightarrow \Ap S)$.


Note that in the other three cases below, we have $S^{a}\neq \varnothing$. Hence, $S$ is not of the form $(\Rightarrow \Box \psi)$ and hence $\Apm S$ is in the form of the big disjunction. 

Suppose $\Box \phi, \Box \psi \to \theta \notin \bar{C}$. 
As the right premise is of the form $S_{\Box \psi, \theta}\cdot (\bar{C} \Rightarrow)$ and by remark $(R_1)$, we have $S_{\Box \psi, \theta} \prec S$, by the induction hypothesis we have $\Epd S_{\Box \psi, \theta}^{a}, \bar{C} \Rightarrow \Ap S_{\Box \psi, \theta}$. Since both $\Box \phi$ and $\Box \psi \to \theta$ are in $S^{a}$ and $\phi \Rightarrow \psi$ is provable, we get $(\phi, \psi, \theta) \in I_m(S^{a})$, which implies that $\Epd S^{a}_{\Box \psi, \theta}$ is a conjunct in $\Epd S^{a}$ and $\Ap S_{\Box \psi, \theta}$ a disjunct in $\Ap S$. Therefore, we get $\Epd S^{a}, \bar{C} \Rightarrow \Ap S$.

If $\Box \psi \to \theta \in \bar{C}$ and $\Box \phi \notin \bar{C}$, then $\psi$ is $p^{\diamond}$-free and $\theta$ is  $p^{\circ}$-free. Set $\bar{C}'=\bar{C}-\{\Box \psi \to \theta\}$.
Since $\phi \Rightarrow \psi$ is provable and $\psi$ is $p^{\diamond}$-free, by the induction hypothesis condition \eqref{exists2} we have $\Epd \phi \Rightarrow \psi$. As $S \cdot (\bar{C'}, \theta \Rightarrow)$ is a premise of the rule and has a shorter proof, by the induction hypothesis we have $\Epd S^{a}, \bar{C'}, \theta \Rightarrow \Ap S$ and by $(Lw)$, $\Box \Epd \phi, \Epd S^{a}, \bar{C'}, \theta \Rightarrow \Ap S$. Applying $(LM \!\!\to)$ on the latter sequent and on $\Epd \phi \Rightarrow \psi$ we get
    $\Box \Epd \phi, \Epd S^{a}, \bar{C'}, \Box \psi \to \theta \Rightarrow \Ap S$.
As $\Box \phi \in S^a$, by definition 
$\Box \Epd \phi$ appears as a conjunct in $\Epd S^{a}$. Hence, $\Epd S^{a}, \bar{C} \Rightarrow \Ap S$.

If $\Box \phi \in \bar{C}$ and $\Box \psi \to \theta \notin \bar{C}$, set $\bar{C'}=\bar{C}-\{\Box \phi\}$. 
As $S_{\psi} \cdot (\phi \Rightarrow)=(\phi \Rightarrow \psi)$ is provable and $\phi$ is $p^{\circ}$-free, by the induction hypothesis $\Epd S_{\psi}^{a}, \phi \Rightarrow \Ap S_{\psi}$, or equivalently $\phi \Rightarrow \Ap S_{\psi}$, as $S^a_{\psi}=\varnothing$ and hence $\Epd S_{\psi}^{a} = \top$. Therefore, by $(M)$, 
\begin{center}
  $\Box\phi \Rightarrow \Box\Ap S_{\psi} \qquad (1)$   
\end{center} 
On the other hand, as the right premise is 
$S_{\Box \psi, \theta} \cdot (\bar{C'}, \Box \phi \Rightarrow)$  and $\bar{C'}$ and $\Box \phi$ are $p^{\circ}$-free, by the induction hypothesis, condition \eqref{forall2}, we get
\begin{center}
$\Epd S_{\Box \psi, \theta}^{a}, \bar{C'}, \Box \phi \Rightarrow \Ap S_{\Box \psi, \theta}$. \qquad (2)
\end{center}
Therefore, using $(1)$ and $(2)$ we get
\begin{center}
$\Box \Ap S_{\psi} \to \Epd S_{\Box \psi, \theta}^{a}, \bar{C'}, \Box \phi \Rightarrow \Ap S_{\Box \psi, \theta}$.
\end{center}
As $\Box \psi \to \theta \in S^{a}$, the formula $\Box \Ap S_{\psi} \to \Epd S_{\Box \psi, \theta}^{a}$ appears as a conjunct in the definition of $\Epd S^{a}$. Hence, $\Epd S^{a}, \bar{C'}, \Box \phi \Rightarrow \Ap S_{\Box \psi, \theta}$.
Together with $(1)$ we get $\Epd S^{a}, \bar{C'}, \Box \phi \Rightarrow \Ap S_{\Box \psi, \theta} \wedge\Box \Ap S_{\psi}$,
which implies $\Epd S^{a}, \bar{C} \Rightarrow \Ap S$, as $\Ap S_{\Box \psi, \theta} \wedge\Box \Ap S_{\psi}$ is a disjunct in $\forall^{\circ} p S$, again by $\Box \psi \to \theta \in S^{a}$.
\end{proof}

\subsection*{Acknowledgements}
We thank three anonymous referees for valuable comments on an earlier version of this paper, and one in particular for the careful reading of the manuscript and the detailed suggestions for improvement. 

\bibliographystyle{aiml22}
\bibliography{aiml22}

\begin{thebibliography}{10}
\expandafter\ifx\csname url\endcsname\relax
  \def\url#1{\texttt{#1}}\fi
\expandafter\ifx\csname urlprefix\endcsname\relax\def\urlprefix{URL }\fi
\newcommand{\enquote}[1]{``#1''}

\bibitem{Iemhoffetal}
Akbar~Tabatabai, A., R.~Iemhoff and R.~Jalali, \emph{Uniform lyndon
  interpolation for basic non-normal modal logics}, in: \emph{International
  Workshop on Logic, Language, Information, and Computation}, Springer, 2021,
  pp. 287--301.

\bibitem{Jalali&Tabatabai}
Akbar~Tabatabai, A. and R.~Jalali, \emph{Universal proof theory: Semi-analytic
  rules and craig interpolation}, arXiv preprint arXiv:1808.06256  (2018).

\bibitem{Jalali&Tabatabaia}
Akbar~Tabatabai, A. and R.~Jalali, \emph{Universal proof theory: semi-analytic
  rules and uniform interpolation}, arXiv preprint arXiv:1808.06258  (2018).

\bibitem{Bilkova}
B{\'\i}lkov{\'a}, M., \emph{Uniform interpolation and propositional quantifiers
  in modal logics}, Studia Logica  (2007), pp.~1--31.

\bibitem{Olivetti}
Dalmonte, T., C.~Grellois and N.~Olivetti, \emph{Intuitionistic non-normal
  modal logics: A general framework}, Journal of Philosophical Logic
  \textbf{49} (2020), pp.~833--882.

\bibitem{Dyckhoff}
Dyckhoff, R., \emph{Contraction-free sequent calculi for intuitionistic logic}.

\bibitem{Fitting}
Fitting, M. and R.~Kuznets, \emph{Modal interpolation via nested sequents},
  Annals of Pure and Applied Logic \textbf{166} (2015), pp.~274--305.

\bibitem{Ghilardi}
Ghilardi, S. and M.~Zawadowski, \emph{Undefinability of propositional
  quantifiers in the modal system {S4}}, Studia Logica \textbf{55} (1995),
  pp.~259--271.

\bibitem{Iemhoff}
Iemhoff, R., \emph{Uniform interpolation and sequent calculi in modal logic},
  Archive for Mathematical Logic \textbf{58} (2019), pp.~155--181.

\bibitem{Iemhoffa}
Iemhoff, R., \emph{Uniform interpolation and the existence of sequent calculi},
  Annals of Pure and Applied Logic \textbf{170} (2019), pp.~1701--1712.

\bibitem{Iemhoffb}
Iemhoff, R., \emph{The {G}4i analogue of a {G}3i calculus}, Studia Logica
  (2022), accepted for publication.

\bibitem{Iemhoffc}
Iemhoff, R., \emph{Proof theory for {L}ax {L}ogic}, in: \emph{Dick de Jongh on
  Intuitionistic and Provability Logic},  Outstanding Contributions to Logic
  \textbf{24}, Springer, 2022 To appear.

\bibitem{Kuznets}
Kuznets, R. and B.~Lellmann, \emph{Interpolation for intermediate logics via
  hyper- and linear nested sequents}, in: \emph{Advances in Modal Logic}, 2018.

\bibitem{Lutzetal}
Lutz, C. and F.~Wolter, \emph{Foundations for uniform interpolation and
  forgetting in expressive description logics}, in: \emph{{IJCAI}'11:
  Proceedings of the twenty-second international joint conference on
  {A}rtificial {I}ntelligence} (2011), pp. 989--995.

\bibitem{Lyon}
Lyon, T., A.~Tiu, R.~Gor{\'e} and R.~Clouston, \emph{{Syntactic Interpolation
  for Tense Logics and Bi-Intuitionistic Logic via Nested Sequents}}, in:
  M.~Fern{\'a}ndez and A.~Muscholl, editors, \emph{28th EACSL Annual Conference
  on Computer Science Logic (CSL 2020)},  Leibniz International Proceedings in
  Informatics (LIPIcs)  \textbf{152} (2020), pp. 28:1--28:16.

\bibitem{Maehara}
Maehara, S., \emph{On the interpolation theorem of craig}, S\^ugaku \textbf{12}
  (1960), pp.~235--237.

\bibitem{Maksimova}
Maksimova, L., \emph{Craig's theorem in superintuitionistic logics and
  amalgamated varieties of pseudo-boolean algebras}, Algebra Logika \textbf{16}
  (1977), pp.~643--681.

\bibitem{Pitts}
Pitts, A., \emph{On an interpretation of second order quantification in first
  order intuitionistic propositional logic}, Journal of Symbolic Logic
  \textbf{57} (1992), pp.~33--52.

\end{thebibliography}
\end{document}